\theoremstyle{plain}
\newtheorem{theorem}{Theorem}[section]
\newtheorem{proposition}[theorem]{Proposition}
\theoremstyle{definition} 
\newtheorem{definition}[theorem]{Definition}
\theoremstyle{remark} 
\newtheorem{remark}[theorem]{Remark}
\newcommand{\new}{\newcommand}
\providecommand{\nor}[1]{\|{#1}\|}
\providecommand{\abs}[1]{\lvert{#1}\rvert}
\providecommand{\set}[1]{\{#1\}}
\providecommand{\scal}[2]{\langle{#1},{#2}\rangle}
\new{\R}{\mathbb R}
\new{\C}{\mathbb C}
\new{\N}{\mathbb N}
\new{\hh}{\mathcal H}
\new{\kk}{\mathcal K}
\new{\Ly}{\mathcal{L}(Y)}
\new{\Fy}{\mathcal{F}(X,Y)}
\new{\CbX}{\mathcal{C}(X;Y)}
\new{\la}{\lambda}
\new{\eps}{\epsilon}
\new{\beeq}[2]{\begin{equation}\label{#1}#2\end{equation}}
\new{\ran}[1]{\operatorname{Ran}#1}
\new{\Ker}[1]{\operatorname{Ker}#1}
\new{\tr}[1]{\operatorname{Tr}#1}
\new{\supp}[1]{\operatorname{supp}#1}
\new{\lspan}[1]{\operatorname{span}\{#1\}}
\new{\lspanc}[1]{\overline{\operatorname{span}}\{#1\}}
\new{\argmin}[2]{\operatornamewithlimits{argmin}\limits_{#1}{#2}}
\new{\PP}[1]{{\mathbb P}[#1]}
\new{\EE}[1]{{\mathbb E}[#1]}
\new{\X}{X}
\new{\Y}{Y}
\new{\F}{\mathcal F}
\new{\um}{1,\ldots,m}
\new{\tv}{T}
\new{\vz}{n}
\new{\rhox}{\rho_{{}_X}}
\new{\Ldue}{L^2(X,\rhox)}
\new{\f}{f_\rho}
\new{\IK}{\Phi_{\rho}}
\new{\Sx}{\Phi_n}
\new{\vy}{\mathbf y}
\new{\K}{\mathbf K}
\new{\ik}{i_\nu}
\new{\LK}{L_\nu}
\begin{document}


\title{An extension of Mercer theorem to vector-valued measurable kernels}

\author{\normalsize{Ernesto De Vito, Veronica Umanit\`a, Silvia Villa}\\
\small \em  DIMA, Universit\`{a} di Genova, Via Dodecaneso 35, 16146 Genova, Italy \\
{\small \tt  \{devito,umanita,villa\}@dima.unige.it}}


%
%


\maketitle

\begin{abstract}
  We extend the classical Mercer theorem to reproducing kernel Hilbert
  spaces whose elements are functions from a measurable space $X$into
  $\mathbb C^n$.  Given a finite measure $\mu$ on $X$, we represent
  the reproducing kernel $K$ as convergent series in terms of the
  eigenfunctions of a suitable compact operator depending on $K$ and
  $\mu$. Our result holds under the mild assumption that $K$ is
  measurable and the associated Hilbert space is
  separable. Furthermore, we show that $X$ has a natural second 
  countable topology with respect to which the eigenfunctions are continuous and
  the series representing $K$ uniformly converges to $K$ on any compact subsets
  of $X\times X$, provided that the support of $\mu$ is $X$.
\end{abstract}

\noindent{\bf Keywords} Reproducing Kernel Hilbert Spaces,  Integral
  Operators,  Eigenvalues,  Statistical 
Learning Theory
\ \\

\noindent{\bf 2000 MSC:}  46E22, 47B32, 47B34, 47G10, 47A70, 68T05 



\section{Introduction}
%
%

Reproducing kernel Hilbert spaces (RKHSs) are spaces of functions
defined on an arbitrary set $X$ and taking values into a normed vector
space $Y$ with the property that the evaluation operator at each point
is continuous. Usually the output space $Y$ is simply $Y=\R$ or
$\mathbb{C}$, but recently  the vector-valued setting
is becoming increasingly popular, especially in machine learning
because of its generality and its good experimental performance in a
variety of different domains  \cite{leliwa01,evmipo05,CapDev07}.  
The mathematical theory for vector-valued RKHS has been completely worked
out in the seminal paper \cite{Sch64}, which studies   the Hilbert spaces that are continuously  embedded into a locally convex topological
vector space, see also \cite{ped57}. If $Y$ is itself a Hilbert
space, the theory can be simplified as shown in
\cite{michelli05,camipoyi08,CarDevToi06,CarDevToiUma10}. In
particular, it remains true that the vector valued RKHSs are completely
characterized  by the corresponding reproducing kernel, which now
takes value in the space of bounded operators on $Y$.

The focus of this paper is on  Mercer theorem \cite{Mer09}.  In the
scalar setting, it provides a series representation,  called
\emph{Mercer representation},  for the reproducing kernel $K$ under
some suitable hypotheses. In the classical setting, $X$
is assumed to be a compact separable metric space and the reproducing
kernel $K$ to be continuous.  Hence, fixed a finite measure $\mu$  on $X$ such its
support is $X$, the integral operator $L_\mu$  with kernel $K$ is
a compact positive operator on $L^2(X,\mu)$ and  it admits an orthonormal basis $\{f_i\}_{i\in I}$  of
eigenfunctions with  non-negative eigenvalues
$\{\sigma_i\}_{i\in I}$ such that each $f_i$ with $\sigma_i>0$ is a
continuous function.  Mercer theorem states that  
\begin{equation}
K(x,t)=\sum_{i\in I} \sigma_i f_i(t)\overline{f_i(x)}\qquad \forall x,t\in X,
\label{eq:1}
\end{equation}
where the series is absolutely and uniformly convergent (see also
\cite{DunSch63}).  In the following we refer to~\eqref{eq:1} as a
Mercer representation of $K$.

The kind of representation for the reproducing kernel plays an special
r\^ole  in the applications. For
example, since the family $\{\sqrt{\sigma_i}f_i:\sigma_i>0\}$ is an
orthonormal basis of the corresponding RKHS $\hh_K$, it provides a
\emph{canonical} feature map which relates the spectral properties of
$L_\mu$ and the structure of $\hh_K$. This characterization has several consequences in the study of learning algorithms, since it allows to prove smoothing properties of kernels and to obtain error estimates, see for example \cite{cuzh07,stechr08} and references therein. In addition, the Mercer representation is an important tool in the theory of stochastic processes \cite{BerTh04,Maur08} and for dimensionality reduction methods, such as kernel PCA \cite{SchSm96,Sh05}. 

However, in many applications, the ``classical hypotheses'' of Mercer
theorem are not satisfied.  For this reason, in the recent years there
has been an increasing interest in Mercer representations under
relaxed assumptions on the input space $X$, on the kernel $K$ and on
$Y$.  A first group of results concerns scalar kernels. For example,
\cite{Sun05} dealt with the case of a $\sigma$-compact metric space
$X$ and a continuous kernel satisfying some natural integrability
conditions. When $X$ is an arbitrary measurable space endowed with a
probability measure, and $K$ is an $L^2$-integrable kernel, resorting
to the spectral properties of the operator $L_\mu$, it is possible to
obtain a Mercer representation of the kernel \cite{Wer95}. The
weakness of these results is that the corresponding series converges
only almost everywhere. More stringent assumptions on the kernel, such
as boundedness, allow to get convergence in $L^\infty$, which is still
too weak to get a pointwise representation \cite{Kon86}.  The preprint
\cite{SteSco10} contains the more general developments on the
subject. In particular, a Mercer representation enjoying pointwise
absolute convergence is obtained under less restrictive assumptions on
the kernel. More precisely, given a finite Borel measure $\mu$ on $X$
and assuming the RKHS separable and compactly embedded into
$L^2(X,\mu)$, a Mercer representation almost everywhere pointwise
convergent is recovered; moreover, it is proved that the convergence
is pointwise absolute if and only if the embedding of $\hh_K$ into
$L^2(X,\mu)$ is injective. Regarding vector valued kernels,
\cite{CarDevToi06} provides an (integral) Mercer representation under
the condition that the $K$ is square-integrable and $Y$ is a
(separable) Hilbert space.

In our paper we extend Mercer theorem in three aspects by assuming that
\begin{enumerate}[i)]
\item the input space $X$ is a measurable space;
\item the output space $Y$ is a finite dimensional vector space;
\item the kernel $K$ is a measurable function and the corresponding RKHS $\hh_K$ is separable.
\end{enumerate}
Generalizing the ideas in \cite{smzh09, DevRosToi10}, we show that $X$
has a natural second countable topology making $K$ a continuous
kernel. Moreover, fixed a finite measure $\mu$ such that its support
is $X$, we construct another measure $\nu$ such that the integral
operator $L_\nu$ of kernel $K$ is compact on $L^2(X,\nu,\C^n)$.
Hence, by using the singular value decomposition, we prove that~the
Mercer representation~\eqref{eq:1} holds true, where $\{f_i\}_{i\in
  I}$ is any orthonormal basis of eigenfunctions of $L_\nu$,
$\{\sigma_i\}_{i\in I}$ the corresponding family of eigenvalues and
the series converges uniformly on the compact subsets of $X\times X$.
If the support of $\mu$ is a proper subset of $X$,
representation~\eqref{eq:1} still holds true provided that $x,t\in
\supp{\mu}$. Note that the assumption on $Y$ can be relaxed allowing
$Y$ to be a separable Hilbert space provided that $K(x,x)$ is a
compact operator\footnote{This assumption implies that $L_\nu$ is
  compact, see Proposition 4.8 of \cite{CarDevToi06}, so that  $L_\nu$
  always has
  a basis of eigenfunctions by Hilbert-Schmidt theorem.} for all $x\in X$. However, for the sake
of clarity we state our results only for finite dimensional output
spaces and, by choosing a basis, we  can further assume that 
$Y=\C^n$.

The paper is organized as it follows. In Section $2$ we
introduce the notation and we recall some basic facts about
vector-valued reproducing kernel Hilbert spaces. Section
\ref{sec:3} contains the main results of the paper: given a measurable
vector valued reproducing kernel $K$, 
Theorem~\ref{mmercer} gives the Mercer representation of $K$
and  Proposition ~\ref{prop:vec} studies the relation between $K$ and
the scalar reproducing kernels associated with the ``diagonal blocks'' of
$K$, see~\eqref{Kj}. The proofs are given in Sections~\ref{sec:conmer} and ~\ref{sec:mmercer}.
In the former we prove the Mercer theorem for continuous vector-valued
kernels defined on metric spaces and satisfying a suitable integrability condition.
Section \ref{sec:mmercer} is devoted to the proof
of Theorem~\ref{topology} and  Proposition ~\ref{prop:vec}. The
appendix collects some properties of 
the associated integral operator.

%
%
\section{Preliminaries and notation}
%
%

For any integer $n\geq 1$, the Euclidean norm and the inner product on $\C^n$ are denoted by
  $\nor{\cdot}$ and $\langle\cdot,\cdot\rangle$. The family $\{e_j\}_{j=1}^n$  is the canonical basis of $\C^n$ and $M_n(\C)$ is the space of
  complex $n\times n$ matrices. For any matrix  $T\in M_n(\C)$ we let
  $\nor{T}=\sup\{\nor{Ty}\,:\,y\in\C^n,\,\nor{y}\leq 1\}$ be the
  operator norm, $T^*$ is the adjoint 
and $\tr{T}=\sum_{j=1}^n T_{jj}$ the trace.\\
Given a set $X$, $\mathcal{F}(X,\mathbb{C}^n)$ denotes the
  vector space of functions from $X$ into $\mathbb{C}^n$. When $X$ is endowed with a $\sigma$-algebra $\mathcal A$ and a positive finite measure $\nu:\mathcal A\to[0,+\infty)$,  then $L^2(X,\nu;\mathbb{C}^n)$ is the Hilbert space of (equivalence classes of) $\nu$-square-integrable functions from $X$ into $\C^n$,  with inner product $\scal{\cdot}{\cdot}_2$ and norm $\nor{\cdot}_2$.  If $X$ has a topology, $\mathcal{C}(X,\mathbb{C}^n)$ is the vector space of continuous functions from $X$ to $\mathbb{C}^n$ and $\mathcal{B}(X)$ is the Borel $\sigma$-algebra.

In this paper we focus on reproducing kernel Hilbert spaces whose
elements are functions from a set $X$ with values in $\C^n$. These
Hilbert spaces are completely characterized by their reproducing
kernel, which is a function from in $X\times X$ to  $M_n(\C)$,  and we
take the kernel  as the primary object.  We recall the following definition. 
\begin{definition} \label{def:k} A map $K:X\times X\to M_n(\mathbb{C})$ is called a $\mathbb{C}^n$-reproducing kernel if 
  \begin{enumerate}[a)]
  \item for all $x,t\in X$, $K(x,t)^*=K(t,x)$;
\item for any $m\geq 1$, $x_1,\ldots,x_m\in X$, $y_1,\ldots,y_m\in \mathbb{C}^n$
\[
\sum_{i,j=1}^m \langle K(x_i,x_j)y_j,y_i\rangle\geq 0.
\]
\end{enumerate}
\end{definition}
\noindent From now on we fix   a $\mathbb{C}^n$-reproducing kernel $K$ and,
for any $x\in X$ and $j=1,\ldots,n$, we denote by $K_x^j$ the function in $\mathcal{F}(X,\mathbb{C}^n)$  given by 
\[
K_x^j(t):=K(t,x)e_j, \qquad t\in X.
\]
We recall that $K$ defines a unique RKHS $\hh_K$,  whose inner product and norm of $\hh_K$ are denoted by $\langle\cdot,\cdot\rangle_K$ and $\nor{\cdot}_K$, such that $\hh_K$ is a
vector subspace of $\mathcal{F}(X,\C^n)$ and 
\begin{align}
K_x^j&\in\hh_K,&& \forall\, x\in X,\,j=1,\ldots,n \nonumber\\
\label{feature}f(x)&=\left(\scal{f}{K_x^1}_K,\ldots,\scal{f}{K_x^n}_K\right),&& \forall\, x\in X,\, f\in\hh_K,
\end{align}
see Proposition 2.1 of \cite{CarDevToi06}.  Furthermore, the following properties hold true
\begin{align}
\label{eq:ker}& K(x,t)_{l j}=\scal{K_t^j}{K_x^{l}}_K,\qquad x,t\in X \,j,l=1,\ldots,n\\
\label{eq:2}&\hh_K=\overline{\mathrm{span}}\{K_xy\,:\, x\in X, y\in \mathbb{C}^n\} \\
\nonumber& f(x)=K_x^*f \qquad x\in X
\end{align}
where $K_x:\C^n \to \hh_K$ is the (bounded) operator defined by $K_x
y=\sum y_j K^j_x$ for all $y=(y^1,\ldots,y^n)\in\C^n$.

Finally, we recall that $\hh_K$ can be realized also as
a closed subspace of some arbitrary Hilbert space by means of a
suitable feature map, as shown by the next result.
\begin{proposition}[Proposition~2.4~\cite{CarDevToi06}]\label{featuremap} 
Let $\hh$ be a Hilbert space and a map $\gamma:X\to\hh^n$. Then the
  operator $W:{\hh}\to \mathcal{F}(X;\mathbb{C}^n)$ defined by
\begin{equation}\label{defW} 
(W u)(x)=(\scal{u}{\gamma_x^1},\ldots,\scal{u}{\gamma_x^n}) , \qquad u\in\hh,\ x\in X,
\end{equation}
is a partial isometry from ${\hh}$ onto the reproducing kernel Hilbert
space $\hh_K$ with reproducing kernel
\begin{equation}
  \label{kernel_feature}
K(x,t)_{lj}=\scal{\gamma_t^j}{\gamma_x^{l}}, \qquad x,t \in X,\qquad l, j=1,\ldots,n.
\end{equation}
Moreover, $W^* W$ is the orthogonal projection onto 
\[\ker{W}^\perp=\lspanc{ \gamma_x y \mid x\in X,\ y\in \mathbb{C}^n}.\]
\end{proposition}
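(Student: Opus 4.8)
The plan is to realize $\hh_K$ concretely as the range of $W$, equipped with the Hilbert-space structure transported from $\hh$ across the partial isometry, and then to invoke the uniqueness of the RKHS attached to a prescribed kernel. It is convenient to rewrite \eqref{defW} in operator form: for each $x\in X$ let $\Gamma_x\colon\hh\to\C^n$ be the bounded (finite-rank) operator $\Gamma_x u=(\scal{u}{\gamma_x^1},\ldots,\scal{u}{\gamma_x^n})$, so that $(Wu)(x)=\Gamma_x u$; a one-line computation gives $\Gamma_x^* y=\sum_{j=1}^n y_j\gamma_x^j=:\gamma_x y$. From this one reads off at once that $u\in\ker W$ if and only if $\scal{u}{\gamma_x^j}=0$ for every $x\in X$ and $j=1,\ldots,n$, so that $\ker W=\mathcal V^\perp$, where $\mathcal V:=\lspanc{\gamma_x y\mid x\in X,\ y\in\C^n}$ (the closed span of the vectors $\gamma_x y$ coincides with that of the $\gamma_x^j$, since $\gamma_x^j=\gamma_x e_j$). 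In particular $W$ is injective on $\mathcal V=\ker W^\perp$.

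Next I would transport the inner product. Since $Wu=Wv$ is equivalent to $P_{\mathcal V}u=P_{\mathcal V}v$, where $P_{\mathcal V}$ denotes the orthogonal projection onto $\mathcal V$, the prescription $\scal{Wu}{Wv}_K:=\scal{P_{\mathcal V}u}{P_{\mathcal V}v}_\hh$ defines an inner product on $\ran W\subseteq\mathcal F(X;\C^n)$; as $Wu=WP_{\mathcal V}u$ (because $u-P_{\mathcal V}u\in\ker W$), the restriction $W|_{\mathcal V}\colon\mathcal V\to\ran W$ is a surjective isometry, and since $\mathcal V$ is closed in $\hh$ this makes $\ran W$ complete, hence a Hilbert space of $\C^n$-valued functions, and $W$ a partial isometry with initial space $\mathcal V$. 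Setting $\kappa_x^j:=W\gamma_x^j$, a direct computation from \eqref{defW} shows that the $l$-th coordinate of $\kappa_x^j(t)=\Gamma_t\gamma_x^j$ is $\scal{\gamma_x^j}{\gamma_t^l}_\hh$, which by \eqref{kernel_feature} equals $K(t,x)_{lj}$; hence $\kappa_x^j=K_x^j$ in the notation preceding \eqref{eq:ker}. Moreover, for $f=Wu\in\ran W$, using $\gamma_x^j\in\mathcal V$,
\[
f(x)_j=\scal{u}{\gamma_x^j}_\hh=\scal{P_{\mathcal V}u}{\gamma_x^j}_\hh=\scal{Wu}{W\gamma_x^j}_K=\scal{f}{\kappa_x^j}_K,
\]
which is the reproducing identity \eqref{feature}, and $\scal{\kappa_t^j}{\kappa_x^l}_K=\scal{\gamma_t^j}{\gamma_x^l}_\hh=K(x,t)_{lj}$, matching \eqref{eq:ker}.

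To conclude, I would first check that the matrix-valued map $K$ defined by \eqref{kernel_feature} is a $\C^n$-reproducing kernel in the sense of Definition~\ref{def:k}: property a) is the conjugate symmetry of $\scal{\cdot}{\cdot}_\hh$, and property b) holds since $\sum_{i,j=1}^m\scal{K(x_i,x_j)y_j}{y_i}=\nor{\sum_{j=1}^m\gamma_{x_j}y_j}_\hh^2\ge0$. Because $\nor{\kappa_x^j}_K=\nor{\gamma_x^j}_\hh<\infty$, the reproducing identity above makes every evaluation functional on $\ran W$ bounded, so $\ran W$ is an RKHS; its reproducing kernel is $K$ by the computation of $\scal{\kappa_t^j}{\kappa_x^l}_K$, and $\ran W=\lspanc{\kappa_x y\mid x\in X,\ y\in\C^n}$ because $W|_{\mathcal V}$ is unitary and $\mathcal V=\lspanc{\gamma_x y}$. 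By the uniqueness of the reproducing kernel Hilbert space with prescribed kernel, $\ran W=\hh_K$; thus $W$ is a partial isometry of $\hh$ onto $\hh_K$. Finally, $W^*W=P_{\mathcal V}$ is the standard identity for a partial isometry with initial space $\mathcal V$: for all $u,v\in\hh$,
\[
\scal{W^*Wu}{v}_\hh=\scal{Wu}{Wv}_K=\scal{P_{\mathcal V}u}{P_{\mathcal V}v}_\hh=\scal{P_{\mathcal V}u}{v}_\hh,
\]
so $W^*W=P_{\mathcal V}$, the orthogonal projection onto $\ker W^\perp$.

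The only genuinely delicate point is the bookkeeping of the two inner products: $W$ becomes a partial isometry, and $W^*$ acquires its stated form, only after $\ran W$ has been given the transported Hilbert structure, and $W^*$ must be computed with respect to that structure. Once this is in place, completeness of $\ran W$, boundedness of the evaluation maps, and the identification $\ran W=\hh_K$ are all routine, the last resting solely on the uniqueness of the RKHS with a given kernel.
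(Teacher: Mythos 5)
Your proof is correct; the paper itself states this proposition without proof, importing it as Proposition~2.4 of \cite{CarDevToi06}. Your argument --- identifying $\ker W=\lspanc{\gamma_x y}^\perp$, transporting the Hilbert structure of $\ker{W}^\perp$ onto $\operatorname{Ran}W$ so that $W$ becomes a partial isometry, verifying the reproducing identity for $\kappa_x^j=W\gamma_x^j$, and invoking uniqueness of the RKHS with prescribed kernel to conclude $\operatorname{Ran}W=\hh_K$ --- is essentially the standard proof given in that reference, with the one genuinely delicate point (that $W^*$ must be computed with respect to the transported inner product) correctly flagged and handled.
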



\section{Mercer theorem for measurable kernels}\label{sec:3}
%
%

In this section we present the main result of the paper, namely a
Mercer representation of a $\C^n$-reproducing kernel $K$ under the
assumptions that $X$ is endowed with a finite measure $\mu$ and $K$ is
measurable.  The distinctive feature of our result with respect to
already existing generalizations of Mercer theorem relies in the
construction of an {\em ad hoc} topological structure on the space
$X$, intrinsically defined by the kernel. Passing through this
topology and introducing a suitable measure related to $\mu$, we do
not assume the space $\hh_K$ to be embedded in
$L^2(X,\mu;\mathbb{C}^n)$, and we are nevertheless able to get a
Mercer representation for the kernel and a strong convergence result
on the series defining it. In particular, we recover uniform
convergence on compact subsets with respect to the topology we
introduce. 

As in \cite{smzh09,DevRosToi10}, we note that the reproducing kernel
  $K$ defines a pseudo-metric $d$ on $X$
  \begin{equation}
{d}(x,t)=\sup_{
  \begin{smallmatrix}
    y\in\C^n\\\nor{y}\leq 1
  \end{smallmatrix}
}\nor{K_xy-K_ty}_K  \qquad x,t\in X,
\label{metricad}
\end{equation}
which induces a (non-Hausdorff) topology $\tau_K$ on $X$. A basis of
$\tau_K$ is provided by the family of open balls $\set{B(x,r) \,:\,
  x\in X,r>0}$ where
\begin{equation}
B(x,r)=\{t\in X\,:\, {d}(x,t)< r\}.\label{balls}
\end{equation}
Note that the pseudo-metric $d$ can be replaced by the equivalent pseudo-metric 
$d'(x,t)=\sqrt{\sum_{j=1}^n \nor{K_x^j-K_t^j}^2_K}$, which gives rise to
  the same topology. The following result states some properties of $\tau_K$.
\begin{theorem}\label{topology}
Assume that $\hh_K$ is separable.
  \begin{enumerate}[i)]
  \item The space $X$ endowed with the topology $\tau_K$ is second countable and $K$ is continuous;
\item If $\mathcal A$ is a $\sigma$-algebra on $X$ with respect to which $K$ is measurable, the Borel $\sigma$-algebra $\mathcal{B}(X)$ generated by $\tau_K$ is contained in
 $\mathcal{A}$;
\item If $\mu:\mathcal A\to [0,+\infty)$ is a finite measure, then there exists a unique closed set $C\subset X$, namely the support of $\mu$, such that
$\mu(C)=\mu(X)$ and, if $C'$ is another closed subset with $\mu(C')=\mu(X)$, then $C'\supset C$. 
  \end{enumerate}
\end{theorem}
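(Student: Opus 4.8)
The plan is to reduce all three statements to the geometry of the canonical map $\iota\colon X\to\hh_K^n$, $\iota(x)=(K_x^1,\dots,K_x^n)$, for which $d'(x,t)=\nor{\iota(x)-\iota(t)}_{\hh_K^n}$ by the very definition of $d'$; thus the $d'$-balls (hence also the $d$-balls, since $d$ and $d'$ are equivalent) are precisely the $\iota$-preimages of the open balls of $\hh_K^n$ centred at points of $\iota(X)$, and $\tau_K$ is the topology they generate. For (i) I would first note that $\hh_K$ separable forces $\hh_K^n$, and hence its subspace $\iota(X)$, to be a separable metric space; choosing a countable set $\set{x_k}$ with $\set{\iota(x_k)}$ dense in $\iota(X)$ makes $\set{x_k}$ a $d'$-dense subset of $X$, so $\set{B(x_k,q)\,:\,k\in\N,\,q\in\mathbb Q_{>0}}$ is a countable base of $\tau_K$. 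Continuity of $K$ then follows from \eqref{eq:ker}: writing $K(x,t)_{lj}=\scal{\iota(t)_j}{\iota(x)_l}_K$ exhibits each entry of $K$ as a continuous function of $(x,t)$, $\iota$ being continuous (an isometric embedding of pseudo-metric spaces) and the inner product of $\hh_K$ jointly continuous.

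For (ii) the key step is to express the pseudo-metric through $K$ itself. A short computation from \eqref{eq:ker} shows that, for the matrix $D(x,t):=K(x,x)-K(x,t)-K(t,x)+K(t,t)$, one has $\nor{K_xy-K_ty}_K^2=\scal{D(x,t)y}{y}$ for every $y\in\C^n$; since $D(x,t)$ is Hermitian (by a) of Definition~\ref{def:k}) and positive semidefinite, this gives $d(x,t)^2=\nor{D(x,t)}$ and $d'(x,t)^2=\tr{D(x,t)}$. If $\mathcal A$ makes $K$ measurable, then $t\mapsto K(t,t)$ is $\mathcal A$-measurable (it is $K$ composed with the diagonal map $t\mapsto(t,t)$, which is $(\mathcal A,\mathcal A\otimes\mathcal A)$-measurable), and for fixed $x$ the sections $t\mapsto K(x,t)$ and $t\mapsto K(t,x)$ are $\mathcal A$-measurable; composing with the continuous operator norm shows $t\mapsto d(x,t)$ is $\mathcal A$-measurable, hence $B(x,r)\in\mathcal A$ for all $x,r$. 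Using the countable base of balls from (i), every $\tau_K$-open set is a countable union of such balls, so $\tau_K\subseteq\mathcal A$ and therefore $\mathcal B(X)=\sigma(\tau_K)\subseteq\mathcal A$.

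Part (iii) is the standard construction of the support of a finite Borel measure on a second countable space. I would fix a countable base $\set{V_k}_{k\in\N}$ of $\tau_K$, set $N:=\bigcup\set{V_k\,:\,\mu(V_k)=0}$, which is open and $\mu$-null, and let $C:=X\setminus N$. Then $C$ is closed and, using $\mu(X)<\infty$, $\mu(C)=\mu(X)$. If $C'$ is any closed set with $\mu(C')=\mu(X)$, the open set $X\setminus C'$ has measure zero, hence every base set it contains has measure zero and so lies in $N$; thus $X\setminus C'\subseteq N$, i.e.\ $C'\supseteq C$. Uniqueness of $C$ is then immediate, as two closed sets with this minimality property each contain the other.

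I expect the only non-routine point to be the identity $d(x,t)^2=\nor{D(x,t)}$ underlying (ii), but it is a direct consequence of \eqref{eq:ker} and the positivity of $D(x,t)$, so I anticipate no real difficulty; the one thing to be careful about is pinning down exactly what ``$K$ measurable'' means and checking that the diagonal restriction $t\mapsto K(t,t)$ and the one-variable sections inherit $\mathcal A$-measurability. Everything else is the soft theory of initial topologies and second countability, or textbook measure theory.
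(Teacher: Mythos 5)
Your proof is correct and, for parts (i) and (iii), follows essentially the same route as the paper: the paper factors the pseudo-metric through the isometry $\Phi([x])=K_x$ into $\mathcal{L}(\C^n,\hh_K)\cong\hh_K^n$ (working on the quotient $\widetilde{X}=X/_\sim$), deduces second countability from separability of $\hh_K$, and builds the support via a countable base; your $N=\bigcup\set{V_k:\mu(V_k)=0}$ construction and the paper's intersection of all closed full-measure sets (with continuity from above) are interchangeable. The genuine divergence is in (ii). The paper proves that the balls lie in $\mathcal A$ by showing that $y\mapsto K_y-K_x\in\mathcal{L}(\C^n,\hh_K)$ is measurable, which it obtains from separability of $\hh_K$ together with Proposition~3.1 of \cite{CarDevToi06} (a Pettis-type argument for the feature map). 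You instead use the identity $\nor{K_xy-K_ty}_K^2=\scal{D(x,t)y}{y}$ with $D(x,t)=K(x,x)-K(x,t)-K(t,x)+K(t,t)$, whence $d(x,t)^2=\nor{D(x,t)}$ because $D(x,t)$ is Hermitian positive semidefinite and its operator norm is the supremum of its quadratic form on the unit ball; this reduces measurability of $t\mapsto d(x,t)$ to measurability of the matrix-valued maps $t\mapsto K(x,t)$, $t\mapsto K(t,x)$ and $t\mapsto K(t,t)$, which is elementary and avoids vector-valued measurability altogether. The one point to pin down --- which you flag yourself --- is the diagonal term: your argument requires $K$ to be jointly $\mathcal A\otimes\mathcal A$-measurable so that composition with $t\mapsto(t,t)$ is legitimate; under the weaker convention of \cite{CarDevToi06} (only each $K_t^j$ assumed measurable as a function of the first variable) the measurability of $t\mapsto K(t,t)$ is not automatic and one would have to fall back on the paper's feature-map argument. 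Under the natural joint-measurability reading of the theorem your proof is complete, and in part (ii) it is more self-contained than the paper's.
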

\noindent The support of $\mu$ is denoted by $\supp{\mu}$ and is, by its very definition, the smallest closed subset of $X$ having full measure. The assumption that $\hh_K$ is separable is essential to prove its existence. 

From now on, we fix a $\sigma$-algebra $\mathcal A$ on $X$ and a
finite measure $\mu$ defined on $\mathcal A$. We assume that $\hh_K$
is separable and $K$ is measurable, and we regard $X$ as a second
countable topological space with respect to the topology
$\tau_K$. Though $K$ is continuous,  this condition does not ensure that the integral operator with kernel $K$ is bounded on $L^2(X,\mu,\C^n)$. We overcome this problem by considering another measure $\nu$,  which is equivalent to $\mu$, such that the integral operator with kernel $K$ is bounded on $L^2(X,\nu,\C^n)$. 
Indeed, define $\nu:\mathcal A\to [0,+\infty)$ as
\begin{equation}
\nu(A):=\int_A\frac{1}{1+\nor{K(x,x)}}\mathrm{d}\mu(x),\qquad A\in\mathcal B(X)\label{nu}.
\end{equation}
Clearly $\nu$ is a positive finite measure, which is equivalent to
$\mu$ and it satisfies $\supp\nu=\supp\mu$. Furthermore, since $\tr{K(x,x)}\leq
n\nor{K(x,x)}$,  
 the integral  $\int_X\tr K(x,x)\mathrm{d}\nu(x)$ is finite and Theorem~ \ref{iK-LK} in the appendix states that the integral operator with kernel $K$
\begin{align}
\LK &: L^2(X,\nu;\mathbb{C}^n)\to L^2(X,\nu;\mathbb{C}^n) \nonumber\\
&(\LK f)(x)=\int_XK(x,t)f(t)\mathrm{d}\nu(t),\label{LKnu} 
\end{align}
 is well-defined, positive and compact\footnote{If $Y$ is infinite
   dimensional and $K(x,x)$ is compact  for all $x\in X$, it is
   possible to prove that $L_\nu$ is compact by Proposition~4.8 of
   \cite{CarDevToi06}.}.  The Hilbert-Schmidt theorem gives the
 existence of a basis of $L^2(X,\nu;\C^n)$ of eigenfunctions of $\LK$
 and this basis provides a Mercer decomposition of $K$, as shown by the
 following result.
\begin{theorem}\label{mmercer}
Let $(X,\mathcal{A})$ be a measurable space endowed with a finite
measure $\mu$.  Assume that the reproducing kernel $K:X\times X\to M_n(\mathbb{C})$ is measurable and $\hh_K$ is separable.
Define $\nu$ as in \eqref{nu} and $\LK$ as in \eqref{LKnu}. 
Then there exists a countable family $\{f_i\}_{i\in I}$ in $\mathcal F(X,\C^n)$ such that:
\begin{enumerate}[a)]
\item for all $i\in I$ the function $f_i$ is continuous with respect to $\tau_K$,
\item the family $\{f_i\}_{i\in I}$ is an orthonormal basis of $\ker{L}_\nu^\perp\subset L^2(X,\nu;\C^n)$
 and, for all $i\in I$, $\LK f_i=\sigma_if_i$ for some $\sigma_i\in (0,+\infty)$.
\end{enumerate}
Given any family $\{f_i\}_{i\in I}$ satisfying $a)$ and $b)$, then
\begin{enumerate}[i)]
\item for all  $x,t\in \supp{\mu}$ and $j,l=1,\ldots,n$
\beeq{formulamercer}
{K(x,t)_{lj}=\sum_{i\in I} \sigma_i
f^{j}_i(t)\overline{f^l_i(x)},
}
where the convergence is uniform on compact subsets of
$\supp{\mu}\times\supp{\mu}$;
\item the family $\{\sqrt{\sigma_i}f_i\}_{i\in I}$ is orthonormal in $\hh_K$;
\item if $\supp{\mu}=X$, $\{\sqrt{\sigma_i}f_i\}_{i\in I}$ is an orthonormal basis of $\hh_K$.
\item\label{parseval} for $j=1,\ldots,n$,  the family $\{\sqrt{\sigma_i}f_i^j\}_{i\in I}$ is a Parseval frame in the scalar reproducing kernel Hilbert space $\hh_{K_j}$ with reproducing kernel $K_j$ given by
\beeq{Kj}{K_j(x,t)=K(x,t)_{jj}\qquad x,t\in X.}
\end{enumerate}
\end{theorem}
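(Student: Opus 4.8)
The plan is to realise $\LK$ as $A^{*}A$ for a suitable ``lifting'' operator $A\colon L^{2}(X,\nu;\C^{n})\to\hh_K$, to read off the eigenfunctions of $\LK$ from the range of $A$ (so that they automatically lie in $\hh_K$ and hence, by Theorem~\ref{topology}, are $\tau_K$-continuous), and then to obtain the Mercer series by a Fourier expansion in $\hh_K$ together with a Dini argument for the uniform convergence. The auxiliary measure $\nu$ of \eqref{nu} enters only through the estimate $\int_{X}\nor{K(x,x)}\,\mathrm d\nu(x)\le\int_{X}\tr K(x,x)\,\mathrm d\nu(x)=:c_{0}<\infty$ (the first inequality because $K(x,x)\ge0$, the finiteness from Theorem~\ref{iK-LK}), which is exactly what is needed below.

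\emph{Setup and items (a), (b), (ii), (iii).} Using $K(x,t)=K_x^{*}K_t$, $\nor{K_x}=\nor{K(x,x)}^{1/2}$ and $\nor{K_x-K_t}=d(x,t)$, I would check that $Ah:=\int_{X}K_t h(t)\,\mathrm d\nu(t)$ (an $\hh_K$-valued Bochner integral) defines a bounded operator into $\hh_K$ with $(Ah)(x)=\int_{X}K(x,t)h(t)\,\mathrm d\nu(t)$ for every $x$, and that the inclusion $\iota\colon\hh_K\to L^{2}(X,\nu;\C^{n})$ is well defined and bounded (every $f\in\hh_K$ satisfies $f(x)-f(t)=(K_x-K_t)^{*}f$, hence is $d$-Lipschitz, hence $\tau_K$-continuous and, by Theorem~\ref{topology}(ii), $\mathcal A$-measurable, and $\nor{f(x)}\le\nor{K(x,x)}^{1/2}\nor{f}_K$). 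A short computation with the reproducing property \eqref{feature} gives $A^{*}=\iota$, whence $A^{*}A=\iota A=\LK$, $\ker A=\ker\LK$, and $\ker A^{*}=\ker\iota=\{f\in\hh_K:f\equiv0\text{ on }\supp\mu\}$ (using $\supp\nu=\supp\mu$ and that $\hh_K$ consists of $\tau_K$-continuous functions). Since $\LK$ is compact and positive, $\ker L_\nu^{\perp}=\overline{\ran{\LK}}$ is separable, so Hilbert--Schmidt gives a countable orthonormal basis $\{g_i\}_{i\in I}$ of $\ker L_\nu^{\perp}$ with $\LK g_i=\sigma_i g_i$, $\sigma_i>0$; put $f_i:=\sigma_i^{-1}Ag_i\in\hh_K$. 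Then $f_i$ is $\tau_K$-continuous and $\iota f_i=\sigma_i^{-1}\LK g_i=g_i$, which is (a) and (b). From $A^{*}A=\LK$ one gets $\scal{\sqrt{\sigma_i}f_i}{\sqrt{\sigma_j}f_j}_K=(\sigma_i\sigma_j)^{-1/2}\scal{\LK g_i}{g_j}_2=\delta_{ij}$, i.e.\ (ii); and if $\supp\mu=X$ then $\ker A^{*}=\{0\}$, so $\overline{\ran A}=\hh_K$, while the closed span of $\{f_i\}_{i\in I}$ in $\hh_K$ equals $\overline{A(\ker L_\nu^{\perp})}=\overline{\ran A}$, giving (iii). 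Finally, any family satisfying (a) and (b) consists of $\tau_K$-continuous functions agreeing $\nu$-a.e., hence on $\supp\mu$, with the above $f_i$, so it suffices to prove (i) and (iv) for this particular family.

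\emph{Item (i).} By \eqref{eq:ker}, $K(x,t)_{lj}=\scal{K_t^{j}}{K_x^{l}}_K$. For $x\in\supp\mu$ the element $K_x^{l}$ is orthogonal to $\ker A^{*}$ (if $u\in\ker\iota$ then $\scal{K_x^{l}}{u}_K=\overline{(u(x))_l}=0$), hence lies in $\overline{\ran A}$, which is the closed span of $\{\sqrt{\sigma_i}f_i\}_{i\in I}$; expanding $K_x^{l}$ there, pairing with $K_t^{j}$ and using \eqref{feature} in the form $\scal{f_i}{K_s^{m}}_K=f_i^{m}(s)$ yields the pointwise identity \eqref{formulamercer} for all $x,t\in\supp\mu$. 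For the uniform convergence, fix a compact $C\subseteq\supp\mu\times\supp\mu$ and let $C_{0}$ be the union of its two compact projections; putting $x=t$, $l=j$ in \eqref{formulamercer} shows that $\sum_i\sigma_i\abs{f_i^{j}(x)}^{2}=K(x,x)_{jj}$ is an increasing series of $\tau_K$-continuous functions with $\tau_K$-continuous sum (Theorem~\ref{topology}(i)), so by Dini's theorem it converges uniformly on $C_{0}$; the Cauchy--Schwarz bound $\bigl|\sum_{i\notin F}\sigma_i f_i^{j}(t)\overline{f_i^{l}(x)}\bigr|\le\bigl(\sum_{i\notin F}\sigma_i\abs{f_i^{j}(t)}^{2}\bigr)^{1/2}\bigl(\sum_{i\notin F}\sigma_i\abs{f_i^{l}(x)}^{2}\bigr)^{1/2}$ then gives uniform convergence of \eqref{formulamercer} on $C$. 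Item (iv) follows from Proposition~\ref{featuremap}: by \eqref{eq:ker} the map $x\mapsto K_x^{j}\in\hh_K$ is a feature map for the scalar kernel $K_j$ of \eqref{Kj}, so (the case $n=1$ of) Proposition~\ref{featuremap} says that $u\mapsto u^{j}$ is a partial isometry of $\hh_K$ onto $\hh_{K_j}$; a partial isometry onto carries an orthonormal basis to a Parseval frame, so when $\supp\mu=X$ the family $\{\sqrt{\sigma_i}f_i^{j}\}_{i\in I}$ is a Parseval frame in $\hh_{K_j}$, and the general case follows by first restricting $K$ and $\nu$ to $\supp\mu$ (where $\nu$ has full support and the RKHS of $K|_{\supp\mu\times\supp\mu}$ is still separable).

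\emph{Main obstacle.} The crux is the ``Setup'' step: the entire construction works precisely because $\nu$ was chosen so that $\int\nor{K(x,x)}\,\mathrm d\nu<\infty$, which simultaneously makes $A$ and $\iota$ bounded and forces the eigenfunctions of $\LK$ into $\hh_K$ (hence makes them $\tau_K$-continuous, which is what links the spectral picture to the topology of Theorem~\ref{topology}). After that, the only genuinely analytic point is the uniform convergence on compacts, which rests on the Dini-type monotonicity of the diagonal series $\sum_i\sigma_i\abs{f_i^{j}(x)}^{2}$; the remaining difficulty is pure bookkeeping, namely keeping track of which assertions hold verbatim on $X$ and which first require passing to $\supp\mu$ when $\supp\mu\subsetneq X$.
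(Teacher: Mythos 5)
Your proof is essentially correct, but it takes a genuinely more direct route than the paper. The paper never works on $(X,\tau_K)$ directly: it first passes to the quotient $\widetilde X=X/\!\sim$ (identifying points with $K_x=K_t$), which is an honest separable \emph{metric} space, proves a standalone continuous Mercer theorem there (Theorem~\ref{Mercer}, via the singular value decomposition of the embedding $i_{\tilde\nu}\colon\hh_{\widetilde K}\to L^2(\widetilde X,\tilde\nu;\C^n)$, the expansion of $\widetilde K^j_{[x]}$ in the resulting orthonormal system, and Dini plus Cauchy--Schwarz), and then transports everything back to $X$ through the unitary $W\colon\hh_{\widetilde K}\to\hh_K$ and the pushforward measure $\tilde\nu$. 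You instead run the same spectral argument (factorization $\LK=\iota\,\iota^*$ with $A=\iota^*$, eigenfunctions pulled into $\hh_K$ via $f_i=\sigma_i^{-1}Ag_i$, expansion of $K_x^l$ in $\overline{\ran A}$, Dini on the diagonal series) directly on $X$ with the pseudo-metric topology $\tau_K$; this works because Dini's theorem, the continuity of $\hh_K$-functions, and the implication ``equal $\nu$-a.e.\ $\Rightarrow$ equal on $\supp\nu$'' all survive in the non-Hausdorff pseudo-metric setting. What you lose is only the ability to quote off-the-shelf metric-space results (you must silently re-verify that Theorem~\ref{iK-LK} holds with ``separable metric'' weakened to ``second countable pseudo-metric''); what you gain is the elimination of $\widetilde X$, $\widetilde K$, $\tilde\nu$ and $W$ altogether. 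Two small points to tighten: (1) your reduction ``any family satisfying (a) and (b) agrees on $\supp\mu$ with the above $f_i$'' is imprecise when an eigenvalue has multiplicity, since another admissible family may use a different orthonormal basis of that eigenspace; the correct statement is that any such family satisfies $f_i=\sigma_i^{-1}Af_i$ on $\supp\mu$, after which your computations for (i)--(iv) apply verbatim to it. (2) Your proof of (iv) for $\supp\mu\subsetneq X$ by restricting to $\supp\mu$ produces a Parseval frame in $\hh_{K_j|_{\supp\mu\times\supp\mu}}$ rather than in $\hh_{K_j}$; note, however, that the paper's own proof of (iv) invokes \eqref{formulamercer} for all $x,t\in X$ and thus also tacitly assumes $\supp\mu=X$, so this caveat is shared rather than a defect of your approach.
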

\noindent We recall that $\{\sqrt{\sigma_i}f_i^j\}_{i\in I}$ is a Parseval frame in $\hh_{K_j}$ if 
\beeq{Parseval}{\nor{f}_{K_j}^2=\sum_{i\in I} \sigma_i|\langle f,f_i^j\rangle_{K_j}|^2\qquad\forall\,f\in\hh_{K_j}.}
Item \ref{parseval}) of Theorem \ref{mmercer} provides a tool to construct $\C^n$-reproducing kernels  as shown by the following result. 
\begin{proposition}\label{prop:vec}
Let $(X,\mathcal{A})$ be a measurable space endowed with a finite
measure $\mu$  such that $\supp\mu=X$.
Given a family  $K_1,\ldots,K_n$ of $n$ scalar measurable reproducing kernels on $X$, for each $j=1,\ldots,n$ take  a Parseval frame $\{f_i^j\}_{i\in I}$ in the corresponding reproducing kernel Hilbert space $\hh_{K_j}$ with $I$ countable, and define the function $K:X\times X\to M_n(\C)$ as
\beeq{defKvett}{K(x,t)_{lj}=\sum_{i\in I}
f^{j}_i(t) \overline{f^l_i(x)}\qquad \forall\,  x,t\in X.
}
The map $K$ is a measurable $\C^n$-reproducing kernel on $X$ satisfying~\eqref{Kj} and $\hh_K$ is separable.
\end{proposition}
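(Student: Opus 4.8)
The plan is to present $K$ as the reproducing kernel attached to an explicit feature map into $\ell^2(I)$, so that Proposition~\ref{featuremap} yields at once that $K$ is a $\C^n$-reproducing kernel and that $\hh_K$ is separable; the two remaining assertions — the identity \eqref{Kj} and the measurability of $K$ — then follow from the Parseval reconstruction formula for the frames $\{f^j_i\}_{i\in I}$ and from the fact that RKHS functions of a measurable scalar kernel are measurable.

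First I would set $\hh=\ell^2(I)$, which is separable because $I$ is countable, and define $\gamma\colon X\to\hh^n$ by $\gamma^l_x=(f^l_i(x))_{i\in I}$ for $l=1,\dots,n$. The point to check is that $\gamma^l_x$ really lies in $\ell^2(I)$: since $\{f^l_i\}_{i\in I}$ is a Parseval frame in $\hh_{K_l}$ and point evaluation at $x$ is the bounded functional $g\mapsto\scal{g}{K_l(\cdot,x)}_{K_l}$, the frame identity gives $\sum_{i\in I}\abs{f^l_i(x)}^2=\nor{K_l(\cdot,x)}_{K_l}^2=K_l(x,x)<\infty$. By Cauchy--Schwarz the same bound shows that the series in \eqref{defKvett} converges absolutely and that $\scal{\gamma^j_t}{\gamma^l_x}_{\ell^2(I)}=\sum_{i\in I}f^j_i(t)\overline{f^l_i(x)}=K(x,t)_{lj}$. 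Proposition~\ref{featuremap} now applies verbatim: the operator $W$ of \eqref{defW} is a partial isometry of $\ell^2(I)$ onto a reproducing kernel Hilbert space whose kernel is precisely the $K$ of \eqref{defKvett}. Hence $K$ is a $\C^n$-reproducing kernel, and, $\hh_K$ being isometric to $\ker{W}^\perp\subseteq\ell^2(I)$, it is separable. (If one prefers not to invoke Proposition~\ref{featuremap}, conditions a)--b) of Definition~\ref{def:k} can be checked by hand: a) is complex conjugation, and b) amounts to the identity $\sum_{a,b}\scal{K(x_a,x_b)y_b}{y_a}=\sum_{i\in I}\bigl|\sum_{b,j}f^j_i(x_b)(y_b)_j\bigr|^2\ge 0$, valid since all the sums converge absolutely.)

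To obtain \eqref{Kj}, apply the Parseval reconstruction formula $g=\sum_{i\in I}\scal{g}{f^j_i}_{K_j}f^j_i$ to the reproducing element $g=K_j(\cdot,t)$ of $\hh_{K_j}$; using the reproducing property and the fact that norm convergence in $\hh_{K_j}$ forces pointwise convergence, one evaluates the resulting series at $x$ and gets $\sum_{i\in I}f^j_i(t)\overline{f^j_i(x)}=K_j(x,t)$, which is \eqref{Kj}. This is the step where the Parseval frame hypothesis (and not merely completeness of $\{f^j_i\}_{i\in I}$ in $\hh_{K_j}$) is genuinely used.

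Finally I would show $K$ is measurable. Since $K_j$ is a measurable kernel, each section $K_j(\cdot,x)$ is $\mathcal A$-measurable; finite linear combinations of such sections are measurable, and any $\hh_{K_j}$-norm limit of them is a pointwise limit, hence measurable. As $\hh_{K_j}=\lspanc{K_j(\cdot,x):x\in X}$, every element of $\hh_{K_j}$ — in particular each $f^j_i$ — is $\mathcal A$-measurable. Therefore each summand $(x,t)\mapsto f^j_i(t)\overline{f^l_i(x)}$ is $(\mathcal A\otimes\mathcal A)$-measurable, and a countable sum of measurable functions is measurable, so $K$ is measurable. I expect the only points requiring a genuine argument beyond bookkeeping to be this last one — that RKHS functions of a measurable kernel are measurable — and, in the direct route, the rearrangement of the absolutely convergent series in the positivity computation; everything else is a direct application of Proposition~\ref{featuremap} and of the frame reconstruction identity.
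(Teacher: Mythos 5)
Your proof is correct, but it is organized differently from the paper's. The paper verifies conditions a)--b) of Definition~\ref{def:k} by the direct rearrangement $\sum_{l,r}\scal{K(x_l,x_r)y_r}{y_l}=\sum_{i\in I}\bigl|\sum_{r,q}y_r^q f_i^q(x_r)\bigr|^2$ (the computation you relegate to a parenthesis), obtains \eqref{Kj} by checking only the diagonal identity $K_j(x,x)=\sum_i|f_i^j(x)|^2=K(x,x)_{jj}$ and then invoking polarization, and disposes of measurability and separability in two lines. Your route through the explicit feature map $\gamma_x^l=(f_i^l(x))_{i\in I}\in\ell^2(I)$ and Proposition~\ref{featuremap} buys two things: separability of $\hh_K$ comes for free from the partial isometry onto $\ker{W}^\perp\subseteq\ell^2(I)$, and, more importantly, your derivation of \eqref{Kj} via the frame reconstruction $K_j(\cdot,t)=\sum_i\scal{K_j(\cdot,t)}{f_i^j}_{K_j}f_i^j$ is more robust than the paper's: two positive definite kernels agreeing on the diagonal need not coincide (e.g.\ the Kronecker-delta kernel and the constant kernel $1$), so the ``polarization'' step in the paper really needs exactly the reconstruction identity you supply. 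Likewise, your observation that elements of the RKHS of a measurable kernel are pointwise limits of sequences of finite combinations of measurable sections is the justification missing from the paper's one-line measurability claim. One cosmetic caveat: the reconstruction actually yields $K_j(x,t)=\sum_i\overline{f_i^j(t)}\,f_i^j(x)$, the complex conjugate of the right-hand side of \eqref{defKvett}; this conjugation mismatch is already present in the paper's own conventions (compare \eqref{eq:ker} with \eqref{formulamercer}) and is immaterial for real kernels, but you should fix one convention and state it.
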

%
\section{Continuous Mercer theorem on a metric space}\label{sec:conmer}
%
%
%
The first step in order to show Theorem \ref{mmercer} is to prove Mercer theorem under the assumption
  that $X$ is a metric space, $K$ is continuous and
  $\int_X\tr{K(x,x)}d\nu(x)$ is finite. For scalar  kernels the result is
well known, see \cite{stechr08}. However, our proof is elementary and it
holds for vector valued kernels.  As in \cite{cola06,robede10}, it is based on the singular value 
decomposition of the embedding $\ik:\hh_K\to L^2(X,\nu;\C^n)$, which
is a compact operator.
We will make use of some known properties of $\ik$ collected in the appendix.

\begin{theorem}\label{Mercer}
Let $X$ be a separable metric space and $\nu$ a finite measure defined on $\mathcal B(X)$. Assume $K:X\times X\to M_n(\mathbb{C})$ to be a continuous reproducing kernel such that 
\beeq{K-tr-cl}{\int_X\tr K(x,x)\,\mathrm{d}\nu(x)<+\infty.}
Define the trace class operator $\LK$ as in~\eqref{LKnu} and 
take  an orthonormal  basis $\{f_i\}_{i\in I}$  of
$\,\ker{\LK}^\perp$ of  continuous eigenvectors of $\LK$ and let
$\{\sigma_i\}_{i\in I}\subseteq (0,+\infty)$ be the corresponding family of eigenvalues. Then the family $\{\sqrt{\sigma_i}f_i\}_{i\in I}$ is orthonormal  in $\hh_K$ and
\beeq{eq:mercer}
{K(x,t)_{lj}=\sum_{i\in I} \sigma_i
f^{j}_i(t) \overline{f^l_i(x)}\qquad \forall\, x,t\in \supp{\nu},}
where the series converges uniformly on any compact subset of
$\supp{\nu}\times\supp{\nu}$.
If $\supp{\nu}=X$, $\{\sqrt{\sigma_i}f_i\}_{i\in I}$ is an orthonormal basis of $\hh_K$.
\end{theorem}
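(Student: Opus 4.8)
The plan is to read off the Mercer decomposition from the singular value decomposition of the inclusion $\ik\colon\hh_K\to L^2(X,\nu;\C^n)$, $f\mapsto[f]$, as in \cite{cola06,robede10}. By~\eqref{K-tr-cl} and the properties of $\ik$ recalled in the appendix (Theorem~\ref{iK-LK}), $\ik$ is a well-defined Hilbert--Schmidt operator whose adjoint is $\ik^*g=\int_XK_tg(t)\,\mathrm{d}\nu(t)$ (a Bochner integral in $\hh_K$), with $\LK=\ik\ik^*$ and $\ik^*\ik$ compact and positive on $\hh_K$; in particular $\ker\LK=\ker\ik^*$ and $\overline{\ran\ik}=(\ker\LK)^\perp$. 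First I would fix the continuous representatives of the eigenfunctions: if $\LK\phi=\sigma\phi$ with $\sigma>0$, then $\sigma^{-1}\LK\phi$ is continuous and represents $[\phi]$, and with this choice the identity $\LK f_i=\sigma_if_i$ holds pointwise on $\supp\nu$ (two continuous functions agreeing $\nu$-a.e.\ agree on $\supp\nu$). Next, set $g_i:=\sigma_i^{-1/2}\ik^*f_i\in\hh_K$. A one-line computation from $\LK=\ik\ik^*$ and $\LK f_i=\sigma_if_i$ gives $\scal{g_i}{g_j}_K=\delta_{ij}$, and evaluating the Bochner integral at a point yields $g_i(x)=\sigma_i^{-1/2}(\LK f_i)(x)=\sqrt{\sigma_i}\,f_i(x)$ for $x\in\supp\nu$. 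This already proves that $\{\sqrt{\sigma_i}f_i\}_{i\in I}$ is orthonormal in $\hh_K$.

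The second step is to identify $\lspanc{g_i\mid i\in I}$. Since $\{f_i\}_{i\in I}$ is an orthonormal basis of $(\ker\LK)^\perp=\overline{\ran\ik}=(\ker\ik^*)^\perp$, the family $\{\ik^*f_i\}_{i\in I}$, hence $\{g_i\}_{i\in I}$, has closed linear span $\overline{\ran\ik^*}=(\ker\ik)^\perp$. On the other hand, recalling that every element of $\hh_K$ is a continuous function (by~\eqref{feature} and local boundedness of $x\mapsto\nor{K(x,x)}$), we have $f\in\ker\ik$ iff $f$ vanishes on $\supp\nu$; combining this with the reproducing identity $\scal{K_xy}{f}_K=\scal{y}{f(x)}$ gives $(\ker\ik)^\perp=\lspanc{K_xy\mid x\in\supp\nu,\ y\in\C^n}$. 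Therefore $\{g_i\}_{i\in I}$ is an orthonormal basis of $(\ker\ik)^\perp$; in particular each $K_x^l$ with $x\in\supp\nu$ lies in its closed span, and if $\supp\nu=X$ it is an orthonormal basis of $\hh_K$ by~\eqref{eq:2}, which is the last assertion.

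For~\eqref{eq:mercer}, fix $x,t\in\supp\nu$, expand $K_x^l=\sum_i\scal{K_x^l}{g_i}_K\,g_i$ and $K_t^j=\sum_i\scal{K_t^j}{g_i}_K\,g_i$ in $\hh_K$, take the inner product of the two expansions, and rewrite the coefficients via~\eqref{eq:ker} and~\eqref{feature} through $g_i^l(x)=\sqrt{\sigma_i}f_i^l(x)$ and $g_i^j(t)=\sqrt{\sigma_i}f_i^j(t)$; since the expansions converge in $\hh_K$ and point evaluations are continuous, this gives~\eqref{eq:mercer} with pointwise convergence. To upgrade to uniform convergence on a compact $C\subseteq\supp\nu$, the crucial remark is that the ``diagonal'' series
\[
\sum_{i\in I}\sigma_i\abs{f_i^l(x)}^2=\sum_{i\in I}\abs{\scal{K_x^l}{g_i}_K}^2=\nor{K_x^l}_K^2=K(x,x)_{ll}
\]
is a series of nonnegative continuous functions converging pointwise to the continuous function $x\mapsto K(x,x)_{ll}$, so it converges uniformly on $C$ by Dini's theorem; hence the tails $\sum_{i\notin F}\sigma_i\abs{f_i^l(x)}^2$ tend to $0$ uniformly on $C$ (as $I$ is countable, it suffices to test against an increasing exhaustion by finite sets). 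The Cauchy--Schwarz estimate
\[
\Bigl|\sum_{i\notin F}\sigma_if_i^j(t)\overline{f_i^l(x)}\Bigr|\le\Bigl(\sum_{i\notin F}\sigma_i\abs{f_i^j(t)}^2\Bigr)^{1/2}\Bigl(\sum_{i\notin F}\sigma_i\abs{f_i^l(x)}^2\Bigr)^{1/2}
\]
then upgrades the convergence of~\eqref{eq:mercer} to uniform on $C\times C\subseteq\supp\nu\times\supp\nu$.

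The bookkeeping around $\ik$, $\ik^*$ and the verification $\scal{g_i}{g_j}_K=\delta_{ij}$ is routine. The two places that need genuine care, and where I expect the main obstacle, are the passage from the $L^2$ eigenvalue equation to a \emph{pointwise} identity on $\supp\nu$ (needed both to evaluate the series at points of $\supp\nu$ and to match $\lspanc{g_i\mid i\in I}$ with $\lspanc{K_xy\mid x\in\supp\nu}$), and the Dini step: it works only because one recognises that the diagonal Mercer series sums exactly to the manifestly continuous function $x\mapsto K(x,x)_{ll}$, so that the hypotheses of Dini's theorem are met on every compact subset of $\supp\nu$.
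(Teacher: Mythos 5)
Your proposal is correct and follows essentially the same route as the paper's proof: the singular value decomposition of the embedding $\ik$, the observation that continuity of the functions in $\hh_K$ identifies $\ker{\ik}^\perp$ with $\lspanc{K_xy : x\in\supp{\nu},\ y\in\C^n}$, the expansion of $K_x^l$ in the orthonormal system $g_i=\sigma_i^{-1/2}\ik^*f_i$, and the Dini plus Cauchy--Schwarz argument for uniform convergence on compact subsets. The two ``delicate points'' you flag (pointwise identities on $\supp{\nu}$ via continuous representatives, and recognising that the diagonal series sums to $K(x,x)_{ll}$ so Dini applies) are exactly the ones the paper addresses.
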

\begin{remark}
Item 4) of Theorem \ref{iK-LK} in the appendix guarantees the existence of a basis $\{f_i\}_{i\in I}$ of $\,\ker{\LK}^\perp$
of continuous eigenvectors of $\LK$.
\end{remark}
\begin{proof} As in Theorem \ref{iK-LK},  we denote by  $\ik:\hh_K\hookrightarrow L^2(X,\nu;\mathbb{C}^n)$ the canonical embedding. Its adjoint $\ik^*$ is given by \eqref{i*}, so that
$\LK=\ik\ik^*$, and we define the operator $T_\nu:\hh_K\to\hh_K$ as $T_\nu:=\ik^*\ik$. 
Take a family $\{f_i\}_{i\in I}$ as in the statement of the theorem and, for all $i\in I$, define $g_i=i^*_K f_i/\sqrt{\sigma_i}$.  The singular value decomposition of $\ik^*$ gives that $\{g_i\}_{i\in I}$  is an orthonormal basis  of $\,\ker{T_\nu}^\perp$ of  eigenvectors of $T_\nu$.  We claim that, for all $x\in\supp{\nu}$ and $j=1,\ldots,n$, $K_x^j\in \,\ker{T_\nu}^\perp$. Indeed,  for any $f\in \ker{T_\nu}$
\begin{align*}
0= \langle T_\nu f,f\rangle_K& = \scal{\ik f}{\ik f} =\sum_{j=1}^n \int_X \abs{f^j(x)}^2 \,\mathrm{d}\nu(x).
\end{align*}
Hence, for any $j=1,\ldots,n$,  the map $x\mapsto f^j(x)=\langle f,K_x^j \rangle_K$ is zero $\nu$-almost everywhere.  Since $\hh_K\subseteq C(X,\C^n)$, see item~1) of Theorem \ref{iK-LK}, the definition of support implies that
$\langle f,K_x^j \rangle_K=0$ for all $x\in\supp{\nu}$. Hence
  \begin{equation}
    \label{nucleo}
    K_t^j\in\mathrm{ker}T_\nu^\perp\qquad \forall t\in \supp{\nu},\quad j=1,\ldots,n.
  \end{equation}
Furthermore, since $\set{g_i}_{i\in I}$ is a basis of $\mathrm{ker}T_\nu^\perp$, for all $x\in\supp{\nu}$  and $j=1,\ldots,n$
\[
K^j_x=\sum_{i\in I}\langle K^j_x, g_i\rangle_K \, g_i.
\] 
Hence, the reproducing property gives that 
$$
K(x,t)_{lj}=\langle K^l_x,K^j_t \rangle_K=\sum_{i\in I}\langle K^l_x, g_i\rangle_K\langle g_i,K^j_t\rangle_K=\sum_{i\in I}\sigma_i f_i^j(t) \overline{f_i^l(x)}
$$
for all $x,t\in\supp{\nu}$.

Concerning the uniform convergence, suppose $I=\mathbb{N}$, fix two compact subsets $C,C'\subseteq \mathrm{supp}\nu$, and consider the remainder
\beeq{eq:mag}{
\sup_{(x,t)\in C\times C'}\left|\sum_{i=q}^{+\infty} \sigma_if_i^j(t)\overline{f_i^l(x)} \right|\leq  \sqrt{\sup_{x\in C}\sum_{i=q}^{+\infty} \sigma_i\abs{f_i^l(x)}^2}\sqrt{\sup_{t\in C'}\sum_{i=q}^{+\infty} \sigma_i\abs{f_i^j(t)}^2}.
}
The series of continuous functions $\sum_{i=0}^{+\infty} \sigma_i\abs{f_i^l(x)}^2$ converges pointwise to the continuous function $K(x,x)_{ll}$ on the compact set $C$, and therefore
uniform  convergence follows from Dini's theorem. Thus, relying on the bound in \eqref{eq:mag}, we have
\[
\lim_{q\to +\infty}\sup_{(x,t)\in C\times C'}\left|\sum_{i=q}^{+\infty} \sigma_if_i^j(t) \overline{f_i^l(x)}\right|=0.
\] 
Assume that $\supp{\nu}=X$.  Since, by~\eqref{eq:2}, 
$\set{K_t^j:t\in X,j=1,\ldots n}$ is total in $\hh_K$,~\eqref{nucleo} implies that
$\mathrm{ker} T_\nu=\{0\}$. Hence  the family $\{\sqrt{\sigma_i}f_i\}_{i\in I}$ is an orthonormal basis of $\hh_K$.
\end{proof}

\section{Proofs}\label{sec:mmercer}
%
%
%
To prove the Mercer representation in the general setting of
  Theorem~\ref{mmercer},  we would like to define a metric $d$ on $X$
 such that $K$ becomes continuous. A natural choice would be the map $d$ defines by~\eqref{metricad}.
However, $d$ is not a metric unless the map $x\mapsto K_x$ is
injective.   To overcome this problem, we first
introduce a suitable metric space  $\widetilde{X}$ and a 
  continuous kernel $\widetilde{K}$  such that the corresponding
  reproducing kernel $\hh_{\widetilde{K}}$ is  isomorphic to $\hh_K$ and, as a
  consequence, we prove  Theorem~\ref{topology}. Afterwards, 
the Mercer
  representation of $K$ is deduced by the corresponding representation~\eqref{eq:mercer}
  of $\widetilde{K}$ given by Theorem~\ref{Mercer}.  From now on $(X,\mathcal{A})$ is a measurable space endowed  with a finite
measure $\mu$ and $K$ is a $\C^n$-measurable reproducing kernel such that $\hh_K$ is separable.

Clearly ${d}$ in \eqref{metricad} is a pseudo-metric. 
The symmetry property and the triangular inequality directly follow from the definition, while from $d(x,t)=0$ we get $K_x=K_t$, which as noted before in general does not imply $x=t$. However, the reproducing property  \eqref{feature} gives $f(x)=f(t)$ for all $f\in\hh_K$, which means that the functions in $\hh_K$ are not able to distinguish the points $x$ and $t$.
This suggests to define an equivalence relation $\sim$  on $X$ by setting 
\begin{equation}
x\sim t  \iff  K_x=K_t.
\end{equation}
Denote by  $\widetilde{X}=X{/_\sim}$ the corresponding quotient space and,  given $[x], [t] \in \widetilde{X}$,  define the function $\tilde{d}([x],[t]):={d}(x,t)$. Then $\tilde{d}$ is a distance on $\widetilde{X}$ so that $(\widetilde{X},\tilde{d})$ is a metric space.

We consider the pull-back topology $\tau_K$ induced on $X$ by the canonical projection $\pi:X\to X/_\sim$, i.e.
$$\tau_K=\{\pi^{-1}(A)\,:\,A\,\text{open in }(\widetilde{X},\tilde{d})\}.$$
 It is clear that   the family of open balls $\set{B(x,r) \,:\,
  x\in X,r>0}$ is a basis for $\tau_K$, see~\eqref{balls}. Now, the proof
of Theorem~\ref{topology} is  a consequence of the next proposition
where $\mathcal{L}(\mathbb{C}^n,\hh_K)$ denotes the space of (bounded)
linear operator from $\C^n$ to $\hh_K$ endowed with the operator 
norm $\nor{\cdot}_{{}_{n,K}}$ so that, for example, 
\[ d(x,t)=\nor{K_x-K_t}_{{}_{n,K}}=  \sup_{\begin{smallmatrix}
    y\in\C^n\\\nor{y}\leq 1
  \end{smallmatrix}
}\nor{K_xy-K_ty}_K. \]
\begin{proposition}\label{tauK}The following facts hold:
\begin{itemize}
\item[i)] the map $\Phi: \widetilde{X}\to
  \mathcal{L}(\mathbb{C}^n,\hh_K)$ given by $\Phi([x])=K_x$ is an
  isometry from $(\widetilde{X},\widetilde{d})$ into
  $(\mathcal{L}(\mathbb{C}^n,\hh_K), \nor{\cdot}_{{}_{n,K}})$;
\item[ii)] the spaces $(\widetilde{X},\tilde{d})$ and $(X,\tau_K)$ are second countable;
\item[iii)] the $\sigma$-algebra $\mathcal{A}$ contains $\mathcal{B}(X)$, the Borel sets generated by $\tau_K$;
\item[iv)] given a positive finite measure $\nu$ on $X$, there exists $\supp\nu$.
\end{itemize}
\end{proposition}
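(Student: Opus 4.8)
The plan is to establish items i)--iv) in the stated order, since each one feeds into the next, reducing everything to the separability of $\hh_K$ and the measurability of $K$. Item i) is a matter of unwinding definitions: $\Phi$ is well defined and injective because $[x]=[t]$ exactly when $K_x=K_t$, and it preserves distances since, for all $x,t\in X$,
\[
\widetilde{d}([x],[t])=d(x,t)=\sup_{\nor{y}\le 1}\nor{K_xy-K_ty}_K=\nor{K_x-K_t}_{{}_{n,K}}=\nor{\Phi([x])-\Phi([t])}_{{}_{n,K}}.
\]

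For item ii) I would exploit that $\C^n$ is finite dimensional: every $T\in\mathcal L(\C^n,\hh_K)$ is determined by the $n$-tuple $(Te_1,\dots,Te_n)\in\hh_K^n$, and $\max_{1\le j\le n}\nor{Te_j}_K\le\nor{T}_{{}_{n,K}}\le\sum_{j=1}^n\nor{Te_j}_K$. Hence, fixing a countable dense set $D\subseteq\hh_K$, the countable family of operators $T$ with $Te_j\in D$ for every $j$ is dense in $(\mathcal L(\C^n,\hh_K),\nor{\cdot}_{{}_{n,K}})$, so this space is separable. A metric subspace of a separable metric space is separable, and a separable metric space is second countable; applying this to the isometric image $\Phi(\widetilde{X})$ and transporting back along the isometry of item i) shows $(\widetilde{X},\widetilde{d})$ is second countable. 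Finally, if $\set{[x_k]}_{k\in\N}$ is dense in $\widetilde{X}$, then $\set{B(x_k,1/m)}_{k,m\in\N}$ is a countable basis of $(X,\tau_K)$, because $B(x_k,1/m)$ is the $\pi$-preimage of the open $\widetilde{d}$-ball of center $[x_k]$ and radius $1/m$, and the $\pi$-preimages of a basis of $\widetilde{X}$ form a basis of the pull-back topology $\tau_K$.

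For item iii) the key point is that every ball $B(x,r)$ lies in $\mathcal A$. Expressing $d$ through the equivalent pseudo-metric $d'$ and using \eqref{eq:ker} together with $K(x,t)^*=K(t,x)$, one computes $d'(x,t)^2=\tr K(x,x)+\tr K(t,t)-2\operatorname{Re}\tr K(x,t)$, which for fixed $x$ is an $\mathcal A$-measurable function of $t$ since $K$ is measurable (so that $t\mapsto K(x,t)$ and $t\mapsto K(t,t)$ are $\mathcal A$-measurable). Therefore $B(x,r)=\set{t\in X:d(x,t)<r}\in\mathcal A$, and since by item ii) $\mathcal B(X)=\sigma(\tau_K)$ is generated by the countably many balls $B(x_k,1/m)$, we get $\mathcal B(X)\subseteq\mathcal A$.

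For item iv), with $\nu$ defined on a $\sigma$-algebra containing $\mathcal B(X)$ (so that all $\tau_K$-open sets are $\nu$-measurable, by item iii)), let $\mathcal U$ be the union of all $\tau_K$-open $\nu$-null sets and put $C=X\setminus\mathcal U$. By second countability $\mathcal U$ is a countable union of basic open sets, each contained in some open null set and hence itself $\nu$-null, so $\nu(\mathcal U)=0$ and $\nu(C)=\nu(X)$; and if $C'$ is a closed set with $\nu(C')=\nu(X)$, then $X\setminus C'$ is an open $\nu$-null set, so $X\setminus C'\subseteq\mathcal U$, i.e.\ $C'\supseteq C$, which also yields uniqueness of $C$. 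I expect the only genuinely non-routine step to be the computation in item iii) that converts measurability of $K$ into measurability of the balls $B(x,r)$; items i), ii) and iv) are standard once the separability of $\hh_K$ is exploited as above.
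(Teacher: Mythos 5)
Your proof is correct and, for items i), ii) and iv), follows essentially the same route as the paper: i) is definition-unwinding; ii) goes through separability of $\mathcal{L}(\C^n,\hh_K)\cong\hh_K^n$ and the transfer of a countable basis along $\pi$; and your iv) (complement of the union of open $\nu$-null sets, made countable by the Lindel\"of property) is the exact dual of the paper's argument (intersection of the closed full-measure sets, reduced to a countable intersection by second countability). The genuine difference is in iii): the paper reduces measurability of the balls to measurability of the $\mathcal{L}(\C^n,\hh_K)$-valued map $y\mapsto K_y-K_x$, which it gets from separability of $\hh_K$ via Proposition 3.1 of \cite{CarDevToi06}, composed with the continuous operator norm; you instead compute $d'(x,t)^2=\tr{K(x,x)}+\tr{K(t,t)}-2\operatorname{Re}\tr{K(x,t)}$ directly from \eqref{eq:ker}, which is more elementary and self-contained and removes the external citation. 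One small inaccuracy to fix: this computation yields measurability of $t\mapsto d'(x,t)$ and hence of the $d'$-balls, not of the $d$-balls $B(x,r)$ as written ($d$ and $d'$ are equivalent but not equal, so the two families of balls differ as sets). This costs nothing: the $d'$-balls also form a basis of $\tau_K$ and the countable-basis argument of ii) applies to them verbatim, so $\mathcal{B}(X)\subseteq\mathcal{A}$ still follows; alternatively, $t\mapsto d(x,t)$ is itself measurable as the supremum, over a countable dense subset of the unit ball of $\C^n$, of the functions $t\mapsto\nor{K_xy-K_ty}_K$, each of which expands in entries of $K$ by the same identity.
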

\begin{proof} Statement {\em i)}  follows directly from the definition of the equivalence relation $\sim$ and the pseudo-distance $d$. \\
$ii)$ Since $\hh_K$ is separable, the space
$\mathcal{L}(\mathbb{C}^n,\hh_K)$ can be identified with $\hh_K^n$, and
then it is separable. Therefore, the set $\Phi(\widetilde{X})\subseteq
\mathcal{L}(\mathbb{C}^n,\hh_K)$ is separable as well, and so is
$\widetilde{X}$, since $\Phi$ is an isometry. Since $\widetilde{X}$ is a
separable metric space, there exists a countable basis $\{A_i\}_{i\in
  N}$ of open subsets of $\widetilde{X}$.  Clearly,  $\{\pi^{-1}(A_i)\}_{i\in
  N}$ is a countable basis for $\tau_K$, that is,  $\tau_K$ is second
countable.

To show that {\em iii)} holds true, it is enough to prove that each
element $B(x,r)$ of the basis of $\tau_K$ belongs to
$\mathcal{A}$. Towards this end, if for a given $x\in X$ we prove that
the map $G_x:(X,\mathcal{A})\to [0,+\infty)$,
$G_x(y)=\nor{K_y-K_x}_{{}_{n,K}}$ is measurable we are done. Since
$G_x$ is the composition of the function $X\ni y\mapsto K_y-K_x\in
\mathcal{L}(\mathbb{C}^n,\hh_K)$, with
$\mathcal{L}(\mathbb{C}^n,\hh_K)\ni A \mapsto \nor{A}_{{}_{n,K}}\in
\mathbb{R}$, and  the latter is continuous, it is enough to prove that
the first one is measurable. This follows from separability of $\hh_K$
and Proposition 3.1 in \cite{CarDevToi06}.

Finally, to prove $iv)$, define $\supp{\nu}$ as the intersection of all $\tau_K$-closed subsets $C\subseteq X$ with $\nu(C)=\nu(X)$. Clearly $\supp{\nu}$ is closed, and we prove that $\nu(\supp{\nu})=\nu(X)$.
Indeed, since $\tau_K$ is second countable, there exists a sequence of closed sets $\{C_j\}_{j\in\mathbb{N}}$ such that, for an arbitrary closed set $C$, $C=\cap_k C_{j_k}$ for a suitable subsequence  $\{C_{j_k}\}_{k\in N}$. Hence,

\begin{align*}
  \nu(\supp{\nu}) &=\nu\Bigg(\bigcap_{{\scriptsize\begin{tabular}{c}\text{$C$ closed,}\\ $\nu(C)=\nu(X)$\end{tabular}}} C\,\Bigg) =\nu\Bigg(\bigcap_{{\scriptsize\begin{tabular}{c}$j\!\in\!\mathbb{N}$\\ $\nu(C_j)=\nu(X)$\end{tabular}}}\!\!\!C_j\Bigg)\\
\\ 
& =\lim_{{{\scriptsize\begin{tabular}{c}$j\!\in\!\mathbb{N}$\\ $\nu(C_j)=\nu(X)$\end{tabular}}}}\hspace{-0.4cm}\nu(C_j)\hspace{0.1cm}=\nu(X).
\end{align*}
\end{proof}
Note that, since $\widetilde{X}$ is a second countable metric space, it
is separable.  We now define a continuous kernel $\widetilde{K}$ on the separable metric space $(\widetilde{X},\widetilde{d})$ in order to apply Theorem~\ref{Mercer}, once that a suitable measure $\tilde{\nu}$ has been also introduced. Set
\[
\widetilde{K}:\widetilde{X}\times \widetilde{X}\to \Ly, \qquad \widetilde{K}([x],[t]):=K(x,t),
\] 
and denote by $\hh_{\widetilde{K}}$ the RKHS associated to
$\widetilde{K}$. First of all, note that~\eqref{eq:ker} and
the definition of the equivalence classes in $\widetilde{X}$ guarantee that
$\widetilde{K}$ is well-defined. The next proposition aims at
clarifying some basic properties of this space and most of all the
connections between $\hh_K$ and $\hh_{\widetilde{K}}$. In particular,
as it will be made precise later, the two spaces roughly speaking
coincide. 

\begin{proposition}\label{Ktilde}
The following facts hold:
\begin{itemize}
\item[i)] ${\widetilde{K}}$ is a continuous kernel and every $f\in\hh_{\widetilde{K}}$ is a continuous function;
\item[ii)] $\hh_{\widetilde{K}}$ is separable;
\item[iii)] $\hh_{\widetilde{K}}$ and $\hh_K$ are unitarily equivalent by means of the unitary operator
\beeq{W}{W:\hh_{\tilde{K}}\to \hh_K\qquad (W\tilde{f})(x):=\tilde{f}([x]);}
\item[iv)] given a sequence of functions $(\widetilde{f}_n)_{n\in
      \mathbb{N}}$ in $\hh_{\widetilde{K}}$ such that $\widetilde{f}_n\to \widetilde{f}\in\hh_{\widetilde{K}}$
  uniformly on the compact sets of $\widetilde{X}$, then $W\widetilde{f}_n\to W\widetilde{f}$
  uniformly on the compact sets of $X$.  
\end{itemize}
\end{proposition}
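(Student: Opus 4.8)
The plan is to prove \emph{iii)} first, since \emph{ii)} follows from it immediately, and then to establish \emph{i)} and \emph{iv)} directly. I will use freely that $\widetilde{K}$ is a genuine $\C^n$-reproducing kernel: it is well-defined on $\widetilde{X}\times\widetilde{X}$ (noted right after its definition) and inherits conditions a)--b) of Definition~\ref{def:k} verbatim from $K$, so $\hh_{\widetilde{K}}$ exists and satisfies \eqref{feature}, \eqref{eq:ker} and \eqref{eq:2}.

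For \emph{iii)} I would invoke Proposition~\ref{featuremap} with $\hh=\hh_{\widetilde{K}}$ and the feature map $\gamma\colon X\to\hh_{\widetilde{K}}^n$, $\gamma_x^j:=\widetilde{K}^j_{[x]}$, where $\widetilde{K}^j_\xi\in\hh_{\widetilde{K}}$ is the function $\eta\mapsto\widetilde{K}(\eta,\xi)e_j$. By the reproducing property \eqref{feature} for $\widetilde{K}$, the operator $W$ produced by \eqref{defW} is exactly $(Wu)(x)=u([x])$, which is the map of \eqref{W}. Proposition~\ref{featuremap} then says $W$ is a partial isometry from $\hh_{\widetilde{K}}$ onto the RKHS whose reproducing kernel has $(l,j)$-entry $\scal{\gamma_t^j}{\gamma_x^l}_{\widetilde{K}}$; by \eqref{eq:ker} for $\widetilde{K}$ this entry equals $\widetilde{K}([x],[t])_{lj}=K(x,t)_{lj}$, so by uniqueness of the RKHS this kernel is $K$ and the target space is $\hh_K$. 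Finally, $W^*W$ is the orthogonal projection onto $\lspanc{\gamma_x y\mid x\in X,\ y\in\C^n}$; since $\gamma_x y=\widetilde{K}_{[x]}y$ and $x\mapsto[x]$ is surjective onto $\widetilde{X}$, this subspace is the closed span of $\{\widetilde{K}_\xi y\mid\xi\in\widetilde{X},\ y\in\C^n\}$, which is all of $\hh_{\widetilde{K}}$ by \eqref{eq:2}. Hence $W^*W=I$, so $W$ is unitary. Item \emph{ii)} follows immediately: $\hh_K$ is separable by hypothesis and a unitary operator preserves separability.

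For \emph{i)} I would first check that $\widetilde{K}$ is jointly continuous. By Proposition~\ref{tauK}~\emph{i)}, $\Phi([x])=K_x$ is an isometry of $(\widetilde{X},\widetilde{d})$ into $\mathcal{L}(\C^n,\hh_K)$, hence continuous; and since $\widetilde{K}([x],[t])_{lj}=\scal{K_t^j}{K_x^l}_K=\scal{\Phi([t])e_j}{\Phi([x])e_l}_K$ is a bounded bilinear function of the pair $(\Phi([x]),\Phi([t]))$, it depends continuously on $([x],[t])$. Continuity of an arbitrary $f\in\hh_{\widetilde{K}}$ then follows from \eqref{feature} and Cauchy--Schwarz: $\nor{f([x])-f([t])}^2=\sum_{j=1}^n\abs{\scal{f}{\widetilde{K}^j_{[x]}-\widetilde{K}^j_{[t]}}_{\widetilde{K}}}^2\le\nor{f}^2_{\widetilde{K}}\sum_{j=1}^n\nor{\widetilde{K}^j_{[x]}-\widetilde{K}^j_{[t]}}^2_{\widetilde{K}}$, and each term equals $\widetilde{K}([x],[x])_{jj}-2\operatorname{Re}\widetilde{K}([t],[x])_{jj}+\widetilde{K}([t],[t])_{jj}$, which tends to $0$ as $[t]\to[x]$ by the continuity of $\widetilde{K}$ just proved. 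Finally \emph{iv)}: $\tau_K$ is, by construction, the pull-back topology along $\pi\colon X\to\widetilde{X}$, $\pi(x)=[x]$, so $\pi$ is continuous and $\pi(D)$ is compact whenever $D\subseteq X$ is $\tau_K$-compact; hence $\sup_{x\in D}\nor{(W\widetilde{f}_n)(x)-(W\widetilde{f})(x)}=\sup_{x\in D}\nor{\widetilde{f}_n([x])-\widetilde{f}([x])}\le\sup_{\xi\in\pi(D)}\nor{\widetilde{f}_n(\xi)-\widetilde{f}(\xi)}\to0$.

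The only point requiring care is the bookkeeping in \emph{iii)}: arranging the feature map so that the abstract $W$ of \eqref{defW} coincides with the concrete $W$ of \eqref{W}, and verifying that the feature vectors $\gamma_x y$ have dense span, so that the \emph{partial} isometry supplied by Proposition~\ref{featuremap} is actually unitary. Once this is settled, the continuity, separability and compact-convergence claims are all routine.
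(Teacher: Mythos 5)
Your proof is correct and follows essentially the same route as the paper: item \emph{iii)} via Proposition~\ref{featuremap} with the feature map $\gamma_x=\widetilde{K}_{[x]}$, item \emph{i)} by a direct estimate using the isometry $\Phi$, and item \emph{iv)} from the continuity of $\pi$. The only cosmetic differences are that you obtain unitarity by showing $W^*W=I$ from the totality of $\{\widetilde{K}_{[x]}y\}$ (the paper instead notes $W$ is injective) and you deduce \emph{ii)} from the unitary equivalence with the separable $\hh_K$ rather than from the separability of $\widetilde{X}$; both variants are equally valid.
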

\begin{proof}
{\em i)} Given $x_0,t_0\in X$ we prove that $\widetilde{K}$ is continuous in $([x_0],[t_0])$. For all $x,t\in X$ we have
\begin{eqnarray*}\nor{\widetilde{K}([x],[t])-\widetilde{K}([x_0],[t_0])}&\leq&
\nor{K_x^*K_t-K_x^*K_{t_0}}+\nor{K_x^*K_{t_0}-K_{x_0}^*K_{t_0}}\\
&\leq&\!\!\!\nor{K_x^*}_{{}_{K,n}}\nor{K_t-K_{t_0}}_{{}_{n,K}}+\!\!\nor{K_x^*-K_{x_0}^*}_{{}_{K,n}}\nor{K_{t_0}}_{{}_{n,K}}.
\end{eqnarray*}
Since $\nor{K_x^*}_{{}_{K,n}}\leq \nor{K_x^*-K_{x_0}^*}_{{}_{K,n}}+\nor{K_{x_0}^*}_{{}_{K,n}}=\nor{K_x-K_{x_0}}_{{}_{n,K}}+\nor{K_{x_0}}_{{}_{n,K}}$, the continuity of $\Phi$ gives the thesis.\\ The second part of statement $i)$ follows by the reproducing formula $f(x)=\widetilde{K}_x^*f$ for all $f\in\hh_{\widetilde{K}}$.\smallskip

$ii)$ Since $\widetilde{X}$ and $\mathbb{C}^n$ are separable (see Proposition \ref{tauK}.$ii$), the space $\hh_{\widetilde{K}}=\overline{\mathrm{span}}\{\widetilde{K}_{[x]}y\,:\, x\in X, y\in \mathbb{C}^n\}$ is separable too.\smallskip

$iii)$ We apply Proposition \ref{featuremap} taking $\hh=\hh_{\widetilde{K}}$ and $\gamma_x=\widetilde{K}_{[x]}$, so that 
$$(W\tilde{f})(x)=\tilde{f}([x])=f(x)$$ for all $x\in X$ and $\tilde{f}\in \hh_{\widetilde{K}}$. Since $$\gamma_x^*\gamma_t=\widetilde{K}([x],[t])=K(x,t),$$ the operator $W$ is a partial isometry from $\hh_{\widetilde{K}}$ into $\hh_K$; moreover, $f=0$ clearly implies $\tilde{f}=0$, and so $W$ is injective.\smallskip

$iv)$  Let $C$ be a compact subset of $(X,\tau_K)$. Since by construction $\pi:X\to \widetilde{X}$ is continuous with respect to $\tau_K$, $\pi(C)$ is compact in $\widetilde{X}$ and therefore $\sup_{[x]\in \pi(C)} |\widetilde{f}_n([x])- \widetilde{f}([x])|\to 0$. Being  by definition $W\widetilde{f}_n(x)=\widetilde{f}_n([x])$, the thesis follows. 
\end{proof}
 
In order to apply Theorem \ref{Mercer} to the kernel $\widetilde{K}$,
the last ingredient we need is a finite measure $\tilde{\nu}$ on $\widetilde{X}$. If $\nu$ is defined as in \eqref{nu},  using the canonical projection we can set
$$\tilde{\nu}(A):=\nu(\pi^{-1}(A))\qquad\text{for all Borel set $A$ in $(\widetilde{X},\widetilde{d})$}.$$
$\tilde{\nu}$ is well defined since $\pi^{-1}(A)\in \mathcal{B}(X)$ being $\pi$ continuous,  and $\mathcal{B}(X)\subseteq\mathcal{A}$ thanks to Proposition \ref{tauK}.$iii)$. Moreover, we clearly have
\beeq{supp}{\supp\mu=\supp{\nu}=\pi^{-1}(\supp{\tilde{\nu}}).}
We are now ready to prove our main result.

\begin{proof}[Proof of Theorem~\ref{mmercer}]
 From the results
  collected  so far, we know that $\widetilde{K}$
  is a continuous kernel by Proposition \ref{Ktilde}, and
  $(\widetilde{X},\widetilde{d})$ is a separable metric space (see
  Proposition \ref{tauK}), which is endowed with a finite measure
  $\tilde{\nu}$.  In order to apply Theorem \ref{Mercer}, we need to
  show that the integrability condition \eqref{K-tr-cl} is met by
  $\widetilde{K}$.  From the definition of $\widetilde{X}$,
  $\widetilde{K}$ and $\tilde{\nu}$, taking into account that
  $\widetilde{K}([x],[x])=K(x,x)$ for all $x\in X$, and using the
  change of variables $[x]=\pi(x)$, we have
  \[
  \int_{\widetilde{X}}
  \widetilde{K}([x],[x])\,\mathrm{d}\tilde{\nu}([x])=\int_{X} K(x,x)
  \mathrm{d}{\nu}(x).
  \]
  Therefore $\widetilde{X}$, $\widetilde{K}$ and $\tilde{\nu}$ satisfy
  the assumptions of Theorem \ref{Mercer}. Hence, $\widetilde{K}$ can be written component-wise as
  \beeq{eq:sek}{\widetilde{K}([x],[t])_{jl}=\sum_{i\in
      I}\sigma_i\tilde{f}^l_i([t])\overline{\tilde{f}^j_i([x])}} where
  $(\sqrt{\sigma}_i\tilde{f}_i)_{i\in I}$ is basis of
    $\ker{L_{\tilde{\nu}}}^\perp$  of eigenvectors of the
  integral operator $L_{\tilde{\nu}}$ whose kernel is $\widetilde{K} $.
  Furthermore $(\sqrt{\sigma}_i\tilde{f}_i)_{i\in I}$ is an orthonormal
  family of $\hh_{\widetilde{K}}$ with $\tilde{f}_i$ continuous on
  $\widetilde{X}$.  Then $f_i:=W\tilde{f}_i$ is a continuous function
  on $X$ thanks to the definition of $\tau_K$ and $W$ (see \eqref{W}),
  and $(\sqrt{\sigma}_if_i)_{i\in I}$ is an orthonormal part of
  $\hh_{K}$ by Proposition \ref{Ktilde}.$iii)$. Moreover, for all
  $f,g\in\hh_K$, it holds $$\int_{\widetilde{X}}
  \tilde{f}([x])\tilde{g}([x]) \mathrm{d}\tilde{\nu}([x])=\int_X
  f(x)g(x)\mathrm{d}\nu(x)$$ by definition of $\tilde{\nu}$, and
  thus $\{f_i\}_{i\in I}$ is an orthonormal family in
  $L^2(X,\nu;\C^n)$ as well. Note that, $\{f_i\}$ is also a basis of eigenvectors of
  $L_\nu$ since $(L_\nu f_i)(x)=(L_{\tilde{\nu}}\tilde{f_i})([x])$ for
  all $i\in I$. The definition of $W$ and equation \eqref{supp}
  entail $$K(x,t)_{jl}=\sum_{i\in I}\sigma_if^l_i(t) \overline{f^j_i(x)}$$
  for all $x,t\in\supp\nu=\supp\mu$.\\
  Since the series in \eqref{eq:sek} is uniformly convergent on the
  compact subsets of $\supp{\tilde{\nu}}\times\supp{\tilde{\nu}}$, by
  Proposition \ref{Ktilde}.$iv$ the latter series is uniformly
  convergent on the compact subsets of $\supp{\mu}\times\supp{\mu}$.

 The unitary equivalence between $\hh_K$ and
  $\hh_{\widetilde{K}}$ (through $W$) implies that
  $(\sqrt{\sigma}_if_i)_{i\in I}$ is an orthonormal basis of $\hh_K$
  if and only if $(\sqrt{\sigma}_i\tilde{f}_i)_{i\in I}$ is an
  orthonormal basis of $\hh_{\widetilde{K}}$.  Hence item~$iii)$
  is a consequence of Theorem~\ref{Mercer} and~\eqref{supp}.

  Finally, we prove item~$\ref{parseval}$). First of all note that it
  straightforward to see that every $K_j$ given by \eqref{Kj} is a
  scalar kernel on $X$. Moreover, it satisfies
$$K_j(x,t)=\sum_{i\in I}
\sigma_if^{j}_i(t) \overline{f^j_i(x)}\qquad \forall\, x,t\in X$$
thanks to equation \eqref{formulamercer}.\\
Fix $j=1,\ldots,n$ and set $\gamma_x=(\sqrt{\sigma_i}f_i^j)_{i\in
  I}\in\ell^2(I)$ for all $x\in X$. Since
$$\scal{\gamma_t}{\gamma_x}=\sum_{i\in I}\sigma_if^{j}_i(t) \overline{f^j_i(x)}=K_j(x,t),$$
the function defined by $$(W^jc)(x):=\scal{c}{\gamma_x}=\sum_{i\in
  I}\sqrt{\sigma_i}c_i\overline{f^j_i(x)},\qquad c\in\ell^2(I),$$ is a
partial isometry onto $\hh_{K_j}$ by Proposition
\ref{featuremap}. Therefore we have
$$\nor{f}^2_j=\nor{W^*f}^2_2=\sum_{i\in I} \sigma_i|\langle f,f_i^j\rangle_{K_j}|^2\qquad\forall\,f\in\hh_{K_j},$$
i.e. $\{\sqrt{\sigma_i}f_i^j\}_{i\in I}$ is a Parseval frame in
$\hh_{K_j}$.
\end{proof}

\begin{proof}[Proof of Proposition~$\ref{prop:vec}$]
  Fix $j=1,\ldots,n$ and let $\{f_i^j\}_{i\in I}$ be a Parseval frame in $\hh_{K_j}$. The function $K$ given by \eqref{defKvett} is a $\C^n$-reproducing kernel on $X$ since
\begin{eqnarray*}\sum_{l,r=1}^m\scal{K(x_l,x_r)y_r}{y_l}&=&\sum_{l,r=1}^m\sum_{p,q=1}^nK(x_l,x_r)_{pq}y_r^q\overline{y_l^{p}}\\
&=&\sum_{l,r=1}^m\sum_{p,q=1}^n\sum_{i\in I}y_r^q\overline{y_l^{p}}f^{q}_i(x_r) \overline{f^p_i(x_l)}\\&=&\sum_{i\in I}\left |\sum_{r=1}^m\sum_{q=1}^ny_r^qf^{q}_i(x_r)\right |^2\geq 0
\end{eqnarray*}
for all $x_1,\ldots,x_m\in X$, $y_1,\ldots,y_m\in\C^n$, $m\geq 1$.
Finally, we have
$$K_j(x,x)=\nor{(K_j)_x}_j^2=\sum_{i\in I} |f_i^j(x)|^2=K(x,x)_{jj}$$
for all $x\in X$, so that $K_j(x,t)=K(x,t)_{jj}$ for all $x,t\in X$ by
polarization's identity.

Since all $K_j$ are measurable, so is $K$. The fact that $I$ is
countable implies that each $\hh_{K_j}$ are separable as well as $\hh_K$.
\end{proof}
%
%
\section{Appendix}
%
%
We recall some basic facts about the embedding of a reproducing kernel
Hilbert space into $L^2(X,\nu,\C^n)$.
\begin{theorem}\label{iK-LK}
Let $X$ be a separable metric space and $\nu$ a finite measure on
$X$. Assume $K:X\times X\to M_n(\mathbb{C})$ to be a
$\C^n$-reproducing kernel such that it is continuous and 
\beeq{K-tr-cl2}{\int_X\tr K(x,x)\,\mathrm{d}\nu(x)<+\infty.} 
The following facts hold true:
\begin{enumerate}
\item every function in $\hh_K$ is continuous and $\hh_K$ is separable;
\item the canonical embedding 
\beeq{eq:ik}{
\ik:\hh_K\hookrightarrow L^2(X,\nu;\mathbb{C}^n)
}
is a well defined compact operator. Its adjoint $\ik^*:L^2(X,\nu;\mathbb{C}^n)\to \hh_K$ is given by 
\beeq{i*}{
\ik^* f=\sum_{j=1}^n \int_X K_x^j f^i(x)\,\mathrm{d}\nu(x),
}
where the integrals converge in $\hh_K$;
\item
the composition $\ik\ik^*:L^2(X,\nu;\mathbb{C}^n)\to L^2(X,\nu;\mathbb{C}^n)$ is a positive trace class operator given by 
\[
(\ik\ik^*f)(x)=\int_X K(x,t)f(t)\mathrm{d}\nu(t)=(\LK f)(x);
\]  
\item there exist a family $\{f_i\}_{i\in I}$ of $L_K$ in $\,\mathcal{C}(X,\mathbb{C}^n)\cap L^2(X,\nu;\C^n)$ and a sequence  $\{\sigma_i\}_{i\in I}$ in $(0,+\infty)$ such that $\{f_i\}_{i\in I}$ is an orthonormal basis of $\ker{\LK}^\perp=\overline{\mathrm{Ran}\LK}$ and
\[
\LK f_i=\sigma_i f_i\qquad\forall\,i\in I.
\]
\end{enumerate}
\end{theorem}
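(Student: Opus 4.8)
The plan is to deduce all four items from the reproducing identities \eqref{feature} and \eqref{eq:ker}, the totality \eqref{eq:2}, and the integrability hypothesis \eqref{K-tr-cl2}, viewing $\ik$ as a Hilbert--Schmidt operator. For item~1, I would first record the identity
\[
\nor{K_x^l-K_t^l}_K^2=K(x,x)_{ll}-K(x,t)_{ll}-K(t,x)_{ll}+K(t,t)_{ll},
\]
which follows from \eqref{eq:ker} and tends to $0$ as $t\to x$ by continuity of $K$. Continuity of every $f\in\hh_K$ is then immediate from \eqref{feature} and Cauchy--Schwarz, $\abs{f(x)_l-f(t)_l}\le\nor{f}_K\nor{K_x^l-K_t^l}_K$. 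The same estimate shows that $x\mapsto K_x^j$ is norm-continuous from $X$ into $\hh_K$, so if $D\subseteq X$ is countable and dense, the countable set $\{K_x^j:x\in D,\ j=1,\dots,n\}$ has the same closed span as $\{K_xy:x\in X,\,y\in\C^n\}$, which is $\hh_K$ by \eqref{eq:2}; hence $\hh_K$ is separable.

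For item~2, the pointwise bound $\nor{f(x)}^2=\sum_{j=1}^n\abs{\scal{f}{K_x^j}_K}^2\le\nor{f}_K^2\sum_{j=1}^nK(x,x)_{jj}=\nor{f}_K^2\tr{K(x,x)}$, integrated against $\nu$, shows that $\ik$ in \eqref{eq:ik} is well defined and bounded. For the adjoint, for $f\in L^2(X,\nu;\C^n)$ and $g\in\hh_K$ one computes $\scal{f}{\ik g}_2=\int_X\sum_{j=1}^nf^j(x)\scal{K_x^j}{g}_K\,\mathrm{d}\nu(x)$, which identifies $\ik^*f$ with the $\hh_K$-valued integral in \eqref{i*}; this integral converges absolutely in $\hh_K$ because $\int_X\abs{f^j(x)}\sqrt{K(x,x)_{jj}}\,\mathrm{d}\nu(x)\le\nor{f^j}_2\bigl(\int_XK(x,x)_{jj}\,\mathrm{d}\nu(x)\bigr)^{1/2}<\infty$, the integrand being weakly measurable since $x\mapsto K_x^j$ is measurable into the separable space $\hh_K$, so Pettis' theorem applies. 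Compactness follows by showing $\ik$ is Hilbert--Schmidt: choosing an orthonormal basis $\{e_k\}_k$ of $\hh_K$, Tonelli and Parseval in $\hh_K$ give
\[
\sum_k\nor{\ik e_k}_2^2=\int_X\sum_{j=1}^n\sum_k\abs{\scal{e_k}{K_x^j}_K}^2\,\mathrm{d}\nu(x)=\int_X\sum_{j=1}^nK(x,x)_{jj}\,\mathrm{d}\nu(x)=\int_X\tr{K(x,x)}\,\mathrm{d}\nu(x)<\infty.
\]

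Item~3 is then a formal consequence: $\ik\ik^*$ is positive and, being a product of Hilbert--Schmidt operators, trace class with $\tr{(\ik\ik^*)}=\nor{\ik}_{HS}^2=\int_X\tr{K(x,x)}\,\mathrm{d}\nu(x)$; inserting $\ik^*f$ into \eqref{feature} and using \eqref{eq:ker} gives $(\ik\ik^*f)(x)_l=\scal{\ik^*f}{K_x^l}_K=\sum_{j=1}^n\int_XK(x,t)_{lj}f^j(t)\,\mathrm{d}\nu(t)$, i.e. $\ik\ik^*=\LK$. For item~4, the Hilbert--Schmidt (spectral) theorem applied to the compact positive self-adjoint operator $\LK$ yields an orthonormal basis $\{f_i\}_{i\in I}$ of $\Ker{\LK}^\perp=\overline{\ran{\LK}}$ consisting of eigenvectors with eigenvalues $\sigma_i>0$; each class $f_i=\sigma_i^{-1}\LK f_i=\sigma_i^{-1}\ik(\ik^*f_i)$ admits a continuous representative because $\ik^*f_i\in\hh_K$ and every element of $\hh_K$ is continuous by item~1.

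The main obstacle is the measure-theoretic bookkeeping inside item~2: justifying the two exchanges of integration and summation (Tonelli for the Hilbert--Schmidt norm, absolute/Bochner convergence for the formula \eqref{i*}) and establishing the $\hh_K$-valued measurability of $x\mapsto K_x^j$, which is exactly where separability of $\hh_K$ enters (via Proposition~3.1 of \cite{CarDevToi06}). Once these points are settled, the remaining assertions are routine consequences of the reproducing property.
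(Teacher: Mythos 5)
Your proposal is correct and follows essentially the same route as the paper: pointwise bound $\nor{f(x)}^2\le\nor{f}_K^2\tr{K(x,x)}$ for boundedness of $\ik$, the duality computation for $\ik^*$ with Bochner convergence via H\"older, the trace computation $\sum_k\nor{\ik e_k}_2^2=\int_X\tr{K(x,x)}\,\mathrm{d}\nu(x)$ for compactness/trace-class, and the factorization $f_i=\sigma_i^{-1}\ik(\ik^*f_i)$ to obtain continuous representatives of the eigenfunctions. The only differences are cosmetic (you justify the continuity of $x\mapsto K_x^j$ explicitly and invoke Pettis measurability, and you place the Hilbert--Schmidt computation under item~2 rather than item~3).
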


\begin{proof} Set $M:=\int_X\tr K(x,x)\,\mathrm{d}\nu(x)\in\R_+$.
\\
\noindent$1.$\,  Given $f\in\hh_K$, by the reproducing property 
\[
f(x)=(\langle f,K_x^1\rangle_K,\ldots,\langle f,K_x^n\rangle_K).
\]
Since the $j$-th component of $f$ coincides  with the composition of
the inner product in $\hh_K$ with the map $x\mapsto K_x^j$, which is
clearly continuous,  it follows that $\hh_K\subseteq
\mathcal{C}(X,\mathbb{C}^n)$. Moreover, since $X$ is separable, there
exists a countable set dense $X_0$ dense in $X$. Hence $\hh_K$
is separable since $\mathcal S=\{K_x^j:x\in X_0,j=1,\ldots,n\}$ is total in
$\hh_K$. Indeed, take $f\in \mathcal S^\perp$, then the reproducing
property gives  that $f(x)^j=\scal{f}{K_x^j}=0$ for all $x\in X_0$ and
$j=1,\ldots,n$. Since $f$ is continuous and $X_0$ dense, it follows
that $f=0$, so that the claim is proved.\\
{$2.$\,} If $f\in\hh_K$, then the following chain of inequalities holds:
\beeq{limitat}{\!\!\int\limits_X \nor{f(x)}^2\,d\nu(x) \leq\!\! \int\limits_X \scal{K_xK_x^*f}{f}^2_{K} \,d\nu(x) \leq\!\! \int\limits_X \nor{f}^2_{K} \tr{K(x,x)} \,d\nu(x) \leq\!\! M \nor{f}^2_{K},}
and the last quantity is finite by hypothesis. Thus $i_K$ is well-defined and bounded. 
Moreover, if $f\in L^2(X,\nu;\mathbb{C}^n)$, we get
$$\scal{\ik^*f}{g}_K=\scal{f}{g}_2=\int_X\scal{f(x)}{K_x^*g}\mathrm{d}\nu(x)=\scal{\int_XK_xf(x)\mathrm{d}\nu(x)}{g}_K,$$
where the integral $\int_XK_xf(x)\mathrm{d}\nu(x)$ converges in $\hh_K$ by the H\"older inequality, since
$$\int_X\nor{K_xf(x)}\mathrm{d}\nu(x)\leq  \int_X \big(\tr K(x,x)\big)^{1/2} \nor{f(x)}\,d\nu(x),$$
 $x\mapsto \big(\tr K(x,x)\big)^{1/2}\in L^2(X,\nu;\mathbb{C}^n)$ and  $f\in L^2(X,\nu;\mathbb{C}^n)$. The component-wise representation in equation \eqref{i*} follows by \eqref{feature}.
 
\ \\
\noindent $3.$\, The formula for $\LK$ follows immediately using the expression for $\ik^*$ obtained in item 1 and the  fact that $\ik$ is the canonical embedding.  In order to prove that $\LK$ is a Hilbert-Schmidt operator, we prove that in fact is a trace class operator.   Fix $\{\varphi_\ell\}_{\ell\in\mathbb{N}}$ an orthonormal basis of $\hh_K$ and note that
\begin{align*}
%
%
\tr \LK=\tr (\ik^*\ik)&=\sum_{\ell\in\mathbb{N}} \nor{i_K \varphi_\ell}_2^2 =\sum_{\ell\in{\mathbb{N}}} \int_X \nor{\varphi_\ell(x)}^2 \,d\nu(x) \\
&=\sum_{\ell\in\mathbb{N}}\int_X \sum_{j=1}^n \langle \varphi_\ell,K_x^j \rangle_K^2 \,d\nu(x)\\
&=\int_X \sum_{j=1}^n\sum_{\ell\in\mathbb{N}} \langle \varphi_\ell,K_x^j \rangle_K^2 \,d\nu(x)\\
&=\int_X \sum_{j=1}^n\nor{K_x^j }_K^2 \,d\nu(x)\\
&=\int_X \sum_{j=1}^n K(x,x)_{jj} \,d\nu(x)\\
&= M.
\end{align*} 

Then $\LK$ is compact and being positive by construction, there exist a  basis of eigenvectors $\{f_i\}_{i\in \mathbb{N}}\subseteq L^2(X,\nu;\mathbb{C}^n)$ and the associated sequence of positive eigenvalues $\{\sigma_i\}_{i\in \mathbb{N}}$.  If we denote by $I$ the set of indices corresponding to strictly positive eigenvalues, we have that $\{f_i\}_{i\in I}$ is a basis of $\ker{\LK}^\perp=\overline{\mathrm{Ran}\LK}$. 
On the other hand, given $f_i$, $i\in I$, if we define 
$g_i=i^*_K f_i/\sqrt{\sigma_i}\in\hh_K$,
we have that $\ik g_i\in C(X,\mathbb{C}^n)$ (i.e. it admits a continuous representative) by $1$, and \beeq{g-i-f-i}{(\ik g_i)(x)=(\LK f_i)(x)/\sqrt{\sigma_i}=\sqrt{\sigma_i}f_i(x)}
for $\nu$-almost all $x\in X$.
Thus, we can assume without loss of generality $f_i$ to be continuous for all $i\in I$.  

\end{proof}


\end{document}